\newtheorem{Remark}{Remark}
\newtheorem{Problem}{Problem}
\newtheorem{Theorem}{Theorem}
\newtheorem{Lemma}{Lemma}
\newtheorem{Assumption}{Assumption}
\let\pdfoutput=\undefined\fi
\chardef\@x10\chardef\@xv60
\def\tcitime{
\def\@time{%
  \@minute\time\@hour\@minute\divide\@hour\@xv
  \ifnum\@hour<\@x 0\fi\the\@hour:%
  \multiply\@hour\@xv\advance\@minute-\@hour
  \ifnum\@minute<\@x 0\fi\the\@minute
  }}%
\def\x@hyperref#1#2#3{%
   \catcode`\~ = 12
   \catcode`\$ = 12
   \catcode`\_ = 12
   \catcode`\# = 12
   \catcode`\& = 12
   \catcode`\% = 12
   \y@hyperref{#1}{#2}{#3}%
}
\def\y@hyperref#1#2#3#4{%
   #2\ref{#4}#3
   \catcode`\~ = 13
   \catcode`\$ = 3
   \catcode`\_ = 8
   \catcode`\# = 6
   \catcode`\& = 4
   \catcode`\% = 14
}
\def\QCTOpt[#1]#2{%
  \def\QCTOptB{#1}
  \def\QCTOptA{#2}
}
\def\QCTNOpt#1{%
  \def\QCTOptA{#1}
  \let\QCTOptB\empty
}
\def\Qct{%
  \@ifnextchar[{%
    \QCTOpt}{\QCTNOpt}
}
\def\QCBOpt[#1]#2{%
  \def\QCBOptB{#1}%
  \def\QCBOptA{#2}%
}
\def\QCBNOpt#1{%
  \def\QCBOptA{#1}%
  \let\QCBOptB\empty
}
\def\Qcb{%
  \@ifnextchar[{%
    \QCBOpt}{\QCBNOpt}%
}
\def\PrepCapArgs{%
  \ifx\QCBOptA\empty
    \ifx\QCTOptA\empty
      {}%
    \else
      \ifx\QCTOptB\empty
        {\QCTOptA}%
      \else
        [\QCTOptB]{\QCTOptA}%
      \fi
    \fi
  \else
    \ifx\QCBOptA\empty
      {}%
    \else
      \ifx\QCBOptB\empty
        {\QCBOptA}%
      \else
        [\QCBOptB]{\QCBOptA}%
      \fi
    \fi
  \fi
}
\def\GRAPHICSPS#1{%
 \ifcase\GRAPHICSTYPE
   \special{ps: #1}%
 \or
   \special{language "PS", include "#1"}%
 \fi
}%
\def\graffile#1#2#3#4{%
    \bgroup
	   \@inlabelfalse
       \leavevmode
       \@ifundefined{bbl@deactivate}{\def~{\string~}}{\activesoff}%
        \raise -#4 \BOXTHEFRAME{%
           \hbox to #2{\raise #3\hbox to #2{\null #1\hfil}}}%
    \egroup
}%
\def\draftbox#1#2#3#4{%
 \leavevmode\raise -#4 \hbox{%
  \frame{\rlap{\protect\tiny #1}\hbox to #2%
   {\vrule height#3 width\z@ depth\z@\hfil}%
  }%
 }%
}%
\let\nographics=\@msidraft
\newif\ifwasdraft
\def\GRAPHIC#1#2#3#4#5{%
   \ifnum\@msidraft=\@ne\draftbox{#2}{#3}{#4}{#5}%
   \else\graffile{#1}{#3}{#4}{#5}%
   \fi
}
\def\addtoLaTeXparams#1{%
    \edef\LaTeXparams{\LaTeXparams #1}}%
\newif\ifBoxFrame \BoxFramefalse
\newif\ifOverFrame \OverFramefalse
\newif\ifUnderFrame \UnderFramefalse
\def\BOXTHEFRAME#1{%
   \hbox{%
      \ifBoxFrame
         \frame{#1}%
      \else
         {#1}%
      \fi
   }%
}
\def\doFRAMEparams#1{\BoxFramefalse\OverFramefalse\UnderFramefalse\readFRAMEparams#1\end}%
\def\readFRAMEparams#1{%
 \ifx#1\end%
  \let\next=\relax
  \else
  \ifx#1i\dispkind=\z@\fi
  \ifx#1d\dispkind=\@ne\fi
  \ifx#1f\dispkind=\tw@\fi
  \ifx#1t\addtoLaTeXparams{t}\fi
  \ifx#1b\addtoLaTeXparams{b}\fi
  \ifx#1p\addtoLaTeXparams{p}\fi
  \ifx#1h\addtoLaTeXparams{h}\fi
  \ifx#1X\BoxFrametrue\fi
  \ifx#1O\OverFrametrue\fi
  \ifx#1U\UnderFrametrue\fi
  \ifx#1w
    \ifnum\@msidraft=1\wasdrafttrue\else\wasdraftfalse\fi
    \@msidraft=\@ne
  \fi
  \let\next=\readFRAMEparams
  \fi
 \next
 }%
\def\IFRAME#1#2#3#4#5#6{%
      \bgroup
      \let\QCTOptA\empty
      \let\QCTOptB\empty
      \let\QCBOptA\empty
      \let\QCBOptB\empty
      #6%
      \parindent=0pt
      \leftskip=0pt
      \rightskip=0pt
      \setbox0=\hbox{\QCBOptA}%
      \@tempdima=#1\relax
      \ifOverFrame
          \typeout{This is not implemented yet}%
          \show\HELP
      \else
         \ifdim\wd0>\@tempdima
            \advance\@tempdima by \@tempdima
            \ifdim\wd0 >\@tempdima
               \setbox1 =\vbox{%
                  \unskip\hbox to \@tempdima{\hfill\GRAPHIC{#5}{#4}{#1}{#2}{#3}\hfill}%
                  \unskip\hbox to \@tempdima{\parbox[b]{\@tempdima}{\QCBOptA}}%
               }%
               \wd1=\@tempdima
            \else
               \textwidth=\wd0
               \setbox1 =\vbox{%
                 \noindent\hbox to \wd0{\hfill\GRAPHIC{#5}{#4}{#1}{#2}{#3}\hfill}\\%
                 \noindent\hbox{\QCBOptA}%
               }%
               \wd1=\wd0
            \fi
         \else
            \ifdim\wd0>0pt
              \hsize=\@tempdima
              \setbox1=\vbox{%
                \unskip\GRAPHIC{#5}{#4}{#1}{#2}{0pt}%
                \break
                \unskip\hbox to \@tempdima{\hfill \QCBOptA\hfill}%
              }%
              \wd1=\@tempdima
           \else
              \hsize=\@tempdima
              \setbox1=\vbox{%
                \unskip\GRAPHIC{#5}{#4}{#1}{#2}{0pt}%
              }%
              \wd1=\@tempdima
           \fi
         \fi
         \@tempdimb=\ht1
         \advance\@tempdimb by -#2
         \advance\@tempdimb by #3
         \leavevmode
         \raise -\@tempdimb \hbox{\box1}%
      \fi
      \egroup%
}%
\def\DFRAME#1#2#3#4#5{%
  \vspace\topsep
  \hfil\break
  \bgroup
     \leftskip\@flushglue
	 \rightskip\@flushglue
	 \parindent\z@
	 \parfillskip\z@skip
     \let\QCTOptA\empty
     \let\QCTOptB\empty
     \let\QCBOptA\empty
     \let\QCBOptB\empty
	 \vbox\bgroup
        \ifOverFrame 
           #5\QCTOptA\par
        \fi
        \GRAPHIC{#4}{#3}{#1}{#2}{\z@}%
        \ifUnderFrame 
           \break#5\QCBOptA
        \fi
	 \egroup
  \egroup
  \vspace\topsep
  \break
}%
\def\FFRAME#1#2#3#4#5#6#7{%
  \@ifundefined{floatstyle}
    {
     \begin{figure}[#1]%
    }
    {
	 \ifx#1h
      \begin{figure}[H]%
	 \else
      \begin{figure}[#1]%
	 \fi
	}
  \let\QCTOptA\empty
  \let\QCTOptB\empty
  \let\QCBOptA\empty
  \let\QCBOptB\empty
  \ifOverFrame
    #4
    \ifx\QCTOptA\empty
    \else
      \ifx\QCTOptB\empty
        \caption{\QCTOptA}%
      \else
        \caption[\QCTOptB]{\QCTOptA}%
      \fi
    \fi
    \ifUnderFrame\else
      \label{#5}%
    \fi
  \else
    \UnderFrametrue%
  \fi
  \begin{center}\GRAPHIC{#7}{#6}{#2}{#3}{\z@}\end{center}%
  \ifUnderFrame
    #4
    \ifx\QCBOptA\empty
      \caption{}%
    \else
      \ifx\QCBOptB\empty
        \caption{\QCBOptA}%
      \else
        \caption[\QCBOptB]{\QCBOptA}%
      \fi
    \fi
    \label{#5}%
  \fi
  \end{figure}%
 }%
\def\makeactives{
  \catcode`\"=\active
  \catcode`\;=\active
  \catcode`\:=\active
  \catcode`\'=\active
  \catcode`\~=\active
}
   \gdef\activesoff{%
      \def"{\string"}%
      \def;{\string;}%
      \def:{\string:}%
      \def'{\string'}%
      \def~{\string~}%
    }
\def\FRAME#1#2#3#4#5#6#7#8{%
 \bgroup
 \ifnum\@msidraft=\@ne
   \wasdrafttrue
 \else
   \wasdraftfalse%
 \fi
 \def\LaTeXparams{}%
 \dispkind=\z@
 \def\LaTeXparams{}%
 \doFRAMEparams{#1}%
 \ifnum\dispkind=\z@\IFRAME{#2}{#3}{#4}{#7}{#8}{#5}\else
  \ifnum\dispkind=\@ne\DFRAME{#2}{#3}{#7}{#8}{#5}\else
   \ifnum\dispkind=\tw@
    \edef\@tempa{\noexpand\FFRAME{\LaTeXparams}}%
    \@tempa{#2}{#3}{#5}{#6}{#7}{#8}%
    \fi
   \fi
  \fi
  \ifwasdraft\@msidraft=1\else\@msidraft=0\fi{}%
  \egroup
 }%
\def\TEXUX#1{"texux"}
\long\def\QQQ#1#2{%
     \long\expandafter\def\csname#1\endcsname{#2}}%
\long\def\QQA#1#2{}%
\def\QTR#1#2{{\csname#1\endcsname {#2}}}%
\def\EXPAND#1[#2]#3{}%
\def\NOEXPAND#1[#2]#3{}%
\def\LaTeXparent#1{}%
\def\ChildStyles#1{}%
\def\ChildDefaults#1{}%
\def\QTagDef#1#2#3{}%
  \providecommand{\UNICODE}[2][]{\protect\rule{.1in}{.1in}}
  \providecommand{\U}[1]{\protect\rule{.1in}{.1in}}
\def\QQfnmark#1{\footnotemark}
 \def\abstract{%
  \if@twocolumn
   \section*{Abstract (Not appropriate in this style!)}%
   \else \small 
   \begin{center}{\bf Abstract\vspace{-.5em}\vspace{\z@}}\end{center}%
   \quotation 
   \fi
  }%
   \def\registered{\relax\ifmmode{}\r@gistered
                    \else$\m@th\r@gistered$\fi}%
 \def\r@gistered{^{\ooalign
  {\hfil\raise.07ex\hbox{$\scriptstyle\rm\text{R}$}\hfil\crcr
  \mathhexbox20D}}}}{}%
\newdimen\theight
\def\newfmtname{LaTeX2e}
  \DeclareOldFontCommand{\rm}{\normalfont\rmfamily}{\mathrm}
  \DeclareOldFontCommand{\sf}{\normalfont\sffamily}{\mathsf}
  \DeclareOldFontCommand{\tt}{\normalfont\ttfamily}{\mathtt}
  \DeclareOldFontCommand{\bf}{\normalfont\bfseries}{\mathbf}
  \DeclareOldFontCommand{\it}{\normalfont\itshape}{\mathit}
  \DeclareOldFontCommand{\sl}{\normalfont\slshape}{\@nomath\sl}
  \DeclareOldFontCommand{\sc}{\normalfont\scshape}{\@nomath\sc}
\def\alpha{{\Greekmath 010B}}%
\def\beta{{\Greekmath 010C}}%
\def\gamma{{\Greekmath 010D}}%
\def\delta{{\Greekmath 010E}}%
\def\epsilon{{\Greekmath 010F}}%
\def\zeta{{\Greekmath 0110}}%
\def\eta{{\Greekmath 0111}}%
\def\theta{{\Greekmath 0112}}%
\def\iota{{\Greekmath 0113}}%
\def\kappa{{\Greekmath 0114}}%
\def\lambda{{\Greekmath 0115}}%
\def\mu{{\Greekmath 0116}}%
\def\nu{{\Greekmath 0117}}%
\def\xi{{\Greekmath 0118}}%
\def\pi{{\Greekmath 0119}}%
\def\rho{{\Greekmath 011A}}%
\def\sigma{{\Greekmath 011B}}%
\def\tau{{\Greekmath 011C}}%
\def\upsilon{{\Greekmath 011D}}%
\def\phi{{\Greekmath 011E}}%
\def\chi{{\Greekmath 011F}}%
\def\psi{{\Greekmath 0120}}%
\def\omega{{\Greekmath 0121}}%
\def\varepsilon{{\Greekmath 0122}}%
\def\vartheta{{\Greekmath 0123}}%
\def\varpi{{\Greekmath 0124}}%
\def\varrho{{\Greekmath 0125}}%
\def\varsigma{{\Greekmath 0126}}%
\def\varphi{{\Greekmath 0127}}%
\def\nabla{{\Greekmath 0272}}
\def\FindBoldGroup{%
   {\setbox0=\hbox{$\mathbf{x\global\edef\theboldgroup{\the\mathgroup}}$}}%
}
\def\Greekmath#1#2#3#4{%
    \if@compatibility
        \ifnum\mathgroup=\symbold
           \mathchoice{\mbox{\boldmath$\displaystyle\mathchar"#1#2#3#4$}}%
                      {\mbox{\boldmath$\textstyle\mathchar"#1#2#3#4$}}%
                      {\mbox{\boldmath$\scriptstyle\mathchar"#1#2#3#4$}}%
                      {\mbox{\boldmath$\scriptscriptstyle\mathchar"#1#2#3#4$}}%
        \else
           \mathchar"#1#2#3#4%
        \fi 
    \else 
        \FindBoldGroup
        \ifnum\mathgroup=\theboldgroup 
           \mathchoice{\mbox{\boldmath$\displaystyle\mathchar"#1#2#3#4$}}%
                      {\mbox{\boldmath$\textstyle\mathchar"#1#2#3#4$}}%
                      {\mbox{\boldmath$\scriptstyle\mathchar"#1#2#3#4$}}%
                      {\mbox{\boldmath$\scriptscriptstyle\mathchar"#1#2#3#4$}}%
        \else
           \mathchar"#1#2#3#4%
        \fi     	    
	  \fi}
\newif\ifGreekBold  \GreekBoldfalse
\let\SAVEPBF=\pbf
\def\pbf{\GreekBoldtrue\SAVEPBF}%
  \newcounter{equationnumber}  
  \def\mathletters{%
     \addtocounter{equation}{1}
     \edef\@currentlabel{\theequation}%
     \setcounter{equationnumber}{\c@equation}
     \setcounter{equation}{0}%
     \edef\theequation{\@currentlabel\noexpand\alph{equation}}%
  }
    \def\BibTeX{{\rm B\kern-.05em{\sc i\kern-.025em b}\kern-.08em
                 T\kern-.1667em\lower.7ex\hbox{E}\kern-.125emX}}}{}%
\def\AmS{{\protect\usefont{OMS}{cmsy}{m}{n}%
                A\kern-.1667em\lower.5ex\hbox{M}\kern-.125emS}}}{}%
\def\@@eqncr{\let\@tempa\relax
    \ifcase\@eqcnt \def\@tempa{& & &}\or \def\@tempa{& &}%
      \else \def\@tempa{&}\fi
     \@tempa
     \if@eqnsw
        \iftag@
           \@taggnum
        \else
           \@eqnnum\stepcounter{equation}%
        \fi
     \fi
     \global\tag@false
     \global\@eqnswtrue
     \global\@eqcnt\z@\cr}
\def\TCItag{\@ifnextchar*{\@TCItagstar}{\@TCItag}}
\def\@TCItag#1{%
    \global\tag@true
    \global\def\@taggnum{(#1)}%
    \global\def\@currentlabel{#1}}
\def\@TCItagstar*#1{%
    \global\tag@true
    \global\def\@taggnum{#1}%
    \global\def\@currentlabel{#1}}
\def\tint{\msi@int\textstyle\int}%
\def\tiint{\msi@int\textstyle\iint}%
\def\tiiint{\msi@int\textstyle\iiint}%
\def\tiiiint{\msi@int\textstyle\iiiint}%
\def\tidotsint{\msi@int\textstyle\idotsint}%
\def\toint{\msi@int\textstyle\oint}%
\newtoks\temptoksa
\newtoks\temptoksb
\newtoks\temptoksc
\def\msi@int#1#2{%
 \def\@temp{{#1#2\the\temptoksc_{\the\temptoksa}^{\the\temptoksb}}}%
 \futurelet\@nextcs
 \@int
}
\def\@int{%
   \ifx\@nextcs\limits
      \typeout{Found limits}%
      \temptoksc={\limits}%
	  \let\@next\@intgobble%
   \else\ifx\@nextcs\nolimits
      \typeout{Found nolimits}%
      \temptoksc={\nolimits}%
	  \let\@next\@intgobble%
   \else
      \typeout{Did not find limits or no limits}%
      \temptoksc={}%
      \let\@next\msi@limits%
   \fi\fi
   \@next   
}%
\def\@intgobble#1{%
   \typeout{arg is #1}%
   \msi@limits
}
\def\msi@limits{%
   \temptoksa={}%
   \temptoksb={}%
   \@ifnextchar_{\@limitsa}{\@limitsb}%
}
\def\@limitsa_#1{%
   \temptoksa={#1}%
   \@ifnextchar^{\@limitsc}{\@temp}%
}
\def\@limitsb{%
   \@ifnextchar^{\@limitsc}{\@temp}%
}
\def\@limitsc^#1{%
   \temptoksb={#1}%
   \@ifnextchar_{\@limitsd}{\@temp}%
}
\def\@limitsd_#1{%
   \temptoksa={#1}%
   \@temp
}
\def\dint{\msi@int\displaystyle\int}%
\def\diint{\msi@int\displaystyle\iint}%
\def\diiint{\msi@int\displaystyle\iiint}%
\def\diiiint{\msi@int\displaystyle\iiiint}%
\def\didotsint{\msi@int\displaystyle\idotsint}%
\def\doint{\msi@int\displaystyle\oint}%
\def\ExitTCILatex{\makeatother }
\if@compatibility\message{amsmath already loaded}\fi\aftergroup\ExitTCILatex}
\if@compatibility\message{amstex already loaded}\fi\aftergroup\ExitTCILatex}
\if@compatibility\message{amsgen already loaded}\fi\aftergroup\ExitTCILatex}
\let\DOTSI\relax
\def\RIfM@{\relax\ifmmode}%
\def\FN@{\futurelet\next}%
\def\iint{\DOTSI\intno@\tw@\FN@\ints@}%
\def\iiint{\DOTSI\intno@\thr@@\FN@\ints@}%
\def\iiiint{\DOTSI\intno@4 \FN@\ints@}%
\def\idotsint{\DOTSI\intno@\z@\FN@\ints@}%
\def\ints@{\findlimits@\ints@@}%
\newif\iflimtoken@
\newif\iflimits@
\def\findlimits@{\limtoken@true\ifx\next\limits\limits@true
 \else\ifx\next\nolimits\limits@false\else
 \limtoken@false\ifx\ilimits@\nolimits\limits@false\else
 \ifinner\limits@false\else\limits@true\fi\fi\fi\fi}%
\def\multint@{\int\ifnum\intno@=\z@\intdots@                          
 \else\intkern@\fi                                                    
 \ifnum\intno@>\tw@\int\intkern@\fi                                   
 \ifnum\intno@>\thr@@\int\intkern@\fi                                 
 \int}
\def\multintlimits@{\intop\ifnum\intno@=\z@\intdots@\else\intkern@\fi
 \ifnum\intno@>\tw@\intop\intkern@\fi
 \ifnum\intno@>\thr@@\intop\intkern@\fi\intop}%
\def\intic@{%
    \mathchoice{\hskip.5em}{\hskip.4em}{\hskip.4em}{\hskip.4em}}%
\def\negintic@{\mathchoice
 {\hskip-.5em}{\hskip-.4em}{\hskip-.4em}{\hskip-.4em}}%
\def\ints@@{\iflimtoken@                                              
 \def\ints@@@{\iflimits@\negintic@
   \mathop{\intic@\multintlimits@}\limits                             
  \else\multint@\nolimits\fi                                          
  \eat@}
 \else                                                                
 \def\ints@@@{\iflimits@\negintic@
  \mathop{\intic@\multintlimits@}\limits\else
  \multint@\nolimits\fi}\fi\ints@@@}%
\def\intkern@{\mathchoice{\!\!\!}{\!\!}{\!\!}{\!\!}}%
\def\plaincdots@{\mathinner{\cdotp\cdotp\cdotp}}%
\def\intdots@{\mathchoice{\plaincdots@}%
 {{\cdotp}\mkern1.5mu{\cdotp}\mkern1.5mu{\cdotp}}%
 {{\cdotp}\mkern1mu{\cdotp}\mkern1mu{\cdotp}}%
 {{\cdotp}\mkern1mu{\cdotp}\mkern1mu{\cdotp}}}%
\def\RIfM@{\relax\protect\ifmmode}
\def\text{\RIfM@\expandafter\text@\else\expandafter\mbox\fi}
\let\nfss@text\text
\def\text@#1{\mathchoice
   {\textdef@\displaystyle\f@size{#1}}%
   {\textdef@\textstyle\tf@size{\firstchoice@false #1}}%
   {\textdef@\textstyle\sf@size{\firstchoice@false #1}}%
   {\textdef@\textstyle \ssf@size{\firstchoice@false #1}}%
   \glb@settings}
\def\textdef@#1#2#3{\hbox{{%
                    \everymath{#1}%
                    \let\f@size#2\selectfont
                    #3}}}
\newif\iffirstchoice@
\def\Let@{\relax\iffalse{\fi\let\\=\cr\iffalse}\fi}%
\def\vspace@{\def\vspace##1{\crcr\noalign{\vskip##1\relax}}}%
\def\multilimits@{\bgroup\vspace@\Let@
 \baselineskip\fontdimen10 \scriptfont\tw@
 \advance\baselineskip\fontdimen12 \scriptfont\tw@
 \lineskip\thr@@\fontdimen8 \scriptfont\thr@@
 \lineskiplimit\lineskip
 \vbox\bgroup\ialign\bgroup\hfil$\m@th\scriptstyle{##}$\hfil\crcr}%
\def\Sb{_\multilimits@}%
\def\endSb{\crcr\egroup\egroup\egroup}%
\def\Sp{^\multilimits@}%
\newdimen\ex@
\def\rightarrowfill@#1{$#1\m@th\mathord-\mkern-6mu\cleaders
 \hbox{$#1\mkern-2mu\mathord-\mkern-2mu$}\hfill
 \mkern-6mu\mathord\rightarrow$}%
\def\leftarrowfill@#1{$#1\m@th\mathord\leftarrow\mkern-6mu\cleaders
 \hbox{$#1\mkern-2mu\mathord-\mkern-2mu$}\hfill\mkern-6mu\mathord-$}%
\def\leftrightarrowfill@#1{$#1\m@th\mathord\leftarrow
\mkern-6mu\cleaders
 \hbox{$#1\mkern-2mu\mathord-\mkern-2mu$}\hfill
 \mkern-6mu\mathord\rightarrow$}%
\def\overrightarrow{\mathpalette\overrightarrow@}%
\def\overrightarrow@#1#2{\vbox{\ialign{##\crcr\rightarrowfill@#1\crcr
 \noalign{\kern-\ex@\nointerlineskip}$\m@th\hfil#1#2\hfil$\crcr}}}%
\def\overleftarrow{\mathpalette\overleftarrow@}%
\def\overleftarrow@#1#2{\vbox{\ialign{##\crcr\leftarrowfill@#1\crcr
 \noalign{\kern-\ex@\nointerlineskip}$\m@th\hfil#1#2\hfil$\crcr}}}%
\def\overleftrightarrow{\mathpalette\overleftrightarrow@}%
\def\overleftrightarrow@#1#2{\vbox{\ialign{##\crcr
   \leftrightarrowfill@#1\crcr
 \noalign{\kern-\ex@\nointerlineskip}$\m@th\hfil#1#2\hfil$\crcr}}}%
\def\underrightarrow{\mathpalette\underrightarrow@}%
\def\underrightarrow@#1#2{\vtop{\ialign{##\crcr$\m@th\hfil#1#2\hfil
  $\crcr\noalign{\nointerlineskip}\rightarrowfill@#1\crcr}}}%
\def\underleftarrow{\mathpalette\underleftarrow@}%
\def\underleftarrow@#1#2{\vtop{\ialign{##\crcr$\m@th\hfil#1#2\hfil
  $\crcr\noalign{\nointerlineskip}\leftarrowfill@#1\crcr}}}%
\def\underleftrightarrow{\mathpalette\underleftrightarrow@}%
\def\underleftrightarrow@#1#2{\vtop{\ialign{##\crcr$\m@th
  \hfil#1#2\hfil$\crcr
 \noalign{\nointerlineskip}\leftrightarrowfill@#1\crcr}}}%
\def\qopnamewl@#1{\mathop{\operator@font#1}\nlimits@}
\let\nlimits@\displaylimits
\def\setboxz@h{\setbox\z@\hbox}
\def\varlim@#1#2{\mathop{\vtop{\ialign{##\crcr
 \hfil$#1\m@th\operator@font lim$\hfil\crcr
 \noalign{\nointerlineskip}#2#1\crcr
 \noalign{\nointerlineskip\kern-\ex@}\crcr}}}}
 \def\rightarrowfill@#1{\m@th\setboxz@h{$#1-$}\ht\z@\z@
  $#1\copy\z@\mkern-6mu\cleaders
  \hbox{$#1\mkern-2mu\box\z@\mkern-2mu$}\hfill
  \mkern-6mu\mathord\rightarrow$}
\def\leftarrowfill@#1{\m@th\setboxz@h{$#1-$}\ht\z@\z@
  $#1\mathord\leftarrow\mkern-6mu\cleaders
  \hbox{$#1\mkern-2mu\copy\z@\mkern-2mu$}\hfill
  \mkern-6mu\box\z@$}
\def\projlim{\qopnamewl@{proj\,lim}}
\def\injlim{\qopnamewl@{inj\,lim}}
\def\varinjlim{\mathpalette\varlim@\rightarrowfill@}
\def\varprojlim{\mathpalette\varlim@\leftarrowfill@}
\def\varliminf{\mathpalette\varliminf@{}}
\def\varliminf@#1{\mathop{\underline{\vrule\@depth.2\ex@\@width\z@
   \hbox{$#1\m@th\operator@font lim$}}}}
\def\varlimsup{\mathpalette\varlimsup@{}}
\def\varlimsup@#1{\mathop{\overline
  {\hbox{$#1\m@th\operator@font lim$}}}}
\def\align{\@verbatim \frenchspacing\@vobeyspaces \@alignverbatim
You are using the "align" environment in a style in which it is not defined.}
\let\csname endalign*\endcsname =\endtrivlist
\def\alignat{\@verbatim \frenchspacing\@vobeyspaces \@alignatverbatim
You are using the "alignat" environment in a style in which it is not defined.}
\let\csname endalignat*\endcsname =\endtrivlist
\def\xalignat{\@verbatim \frenchspacing\@vobeyspaces \@xalignatverbatim
You are using the "xalignat" environment in a style in which it is not defined.}
\let\csname endxalignat*\endcsname =\endtrivlist
\def\gather{\@verbatim \frenchspacing\@vobeyspaces \@gatherverbatim
You are using the "gather" environment in a style in which it is not defined.}
\let\csname endgather*\endcsname =\endtrivlist
\def\multiline{\@verbatim \frenchspacing\@vobeyspaces \@multilineverbatim
You are using the "multiline" environment in a style in which it is not defined.}
\let\csname endmultiline*\endcsname =\endtrivlist
\def\arrax{\@verbatim \frenchspacing\@vobeyspaces \@arraxverbatim
You are using a type of "array" construct that is only allowed in AmS-LaTeX.}
\def\tabulax{\@verbatim \frenchspacing\@vobeyspaces \@tabulaxverbatim
You are using a type of "tabular" construct that is only allowed in AmS-LaTeX.}
\let\csname endarrax*\endcsname =\endtrivlist
\let\csname endtabulax*\endcsname =\endtrivlist
 \def\endequation{%
     \ifmmode\ifinner 
      \iftag@
        \addtocounter{equation}{-1} 
        $\hfil
           \displaywidth\linewidth\@taggnum\egroup \endtrivlist
        \global\tag@false
        \global\@ignoretrue   
      \else
        $\hfil
           \displaywidth\linewidth\@eqnnum\egroup \endtrivlist
        \global\tag@false
        \global\@ignoretrue 
      \fi
     \else   
      \iftag@
        \addtocounter{equation}{-1} 
        \eqno \hbox{\@taggnum}
        \global\tag@false%
        $$\global\@ignoretrue
      \else
        \eqno \hbox{\@eqnnum}
        $$\global\@ignoretrue
      \fi
     \fi\fi
 } 
 \newif\iftag@ \tag@false
 \def\TCItag{\@ifnextchar*{\@TCItagstar}{\@TCItag}}
 \def\@TCItag#1{%
     \global\tag@true
     \global\def\@taggnum{(#1)}%
     \global\def\@currentlabel{#1}}
 \def\@TCItagstar*#1{%
     \global\tag@true
     \global\def\@taggnum{#1}%
     \global\def\@currentlabel{#1}}
     \def\tag{\@ifnextchar*{\@tagstar}{\@tag}}
     \def\@tag#1{%
         \global\tag@true
         \global\def\@taggnum{(#1)}}
     \def\@tagstar*#1{%
         \global\tag@true
         \global\def\@taggnum{#1}}
\title{\LARGE \bf
Differentially private Nash equilibrium seeking for networked aggregative games}
\author{Maojiao Ye, Guoqiang Hu, Lihua Xie and Shengyuan Xu
\thanks{M. Ye and S. Xu are with the School of Automation, Nanjing University of Science and Technology, Nanjing 210094, P.R. China (Email: mjye@njust.edu.cn; syxu@njust.edu.cn); G. Hu and L. Xie are with the School of Electrical and Electronic Engineering, Nanyang Technological University, 639798, Singapore (Email: gqhu@ntu.edu.sg; ELHXIE@ntu.edu.sg).}
\thanks{This work is supported by the Natural Science Foundation of China (NSFC) under Grant 61803202, the Natural Science Foundation of Jiangsu Province, No. BK20180455 and the Fundamental Research Funds for the Central Universities, No. 30918011332.}
}
\begin{document}

\maketitle
\thispagestyle{empty}
\pagestyle{empty}

\begin{abstract}
This paper considers the privacy-preserving Nash equilibrium seeking strategy design for a class of networked aggregative games, in which the players' objective functions are considered to be sensitive information to be protected. In particular, we consider that the networked game is free of central node and the aggregate information is not directly available to the players. As there is no central authority to provide the aggregate information required by each player to update their actions, a dynamic average consensus protocol is employed to estimate it. To protect the players' privacy, we perturb the transmitted information among the players by independent random noises drawn from Laplace distributions. By synthesizing the perturbed average consensus protocol with a gradient algorithm, distributed privacy-preserving Nash equilibrium seeking strategies are established for the aggregative games under both fixed and time-varying communication topologies. With explicit quantifications of the mean square errors, the convergence results of the proposed methods are presented. Moreover, it is analytically proven that the proposed algorithm is $\epsilon$-differentially private, where $\epsilon$ depends on the stepsize of the gradient algorithm and the scaling parameter of the random variables. The presented results indicate that there is a tradeoff between the convergence accuracy and the privacy level.  Lastly, a numerical example is provided for the verification of the proposed methods.
\end{abstract}

\begin{keywords}
Nash equilibrium seeking; privacy protection; differential privacy; random variable.
\end{keywords}

\section{INTRODUCTION}

Privacy has become a critical concern for many practical systems that involve sensitive data transmission and collection. Wireless sensor networks \cite{LiAHN}, smart grids \cite{LiangTSG13}, social networks \cite{Wu10}, just to name a few, are typical examples that are in urgent need of privacy protection techniques. Inspired by the fact that privacy preservation is pivotal in information-sensitive systems, privacy protection methods have gained increasing attention in recent years. For instance, in the field of database and data mining, cryptographic secure multi-party computation methods, random perturbation techniques and $l$-diversity, $k$-anonymity based algorithms were adopted for information sharing systems, data collection systems and data publishing systems, respectively \cite{LiAHN}. Homomorphic encryption was adapted for privacy protection in smart grids in \cite{LiangTSG13}.  Randomization, $k$-anonymity and generalization-based approaches can be utilized for privacy protection in social networks \cite{Wu10}. Motivated by the importance of privacy protection, this paper aims to achieve the distributed Nash equilibrium seeking for networked aggregative games with privacy guarantees.

In many practical situations, the utilities associated with the interacting decision-makers rely both on the decision-maker's own action and an aggregate of all the decision-makers' actions. For example, in the energy consumption model described in \cite{YEcyber17}, the utilities of the electricity users are determined by the user's own energy consumption as well as the total energy consumption in the electricity market. Cournot price/quantity competitions among multiple oligopolistic firms fall into similar scenario \cite{Agiza03}. In  factory production, the utility of each manufactory relies on the averaged output of all the engaged manufactories and the manufactory's own production \cite{WangIET}. In addition, public good provision models and many other examples are also of aggregative nature, i.e., the interacting participants affect each other through a specific aggregate of their actions rather than in an arbitrary fashion \cite{Cornes}. Aggregative games serve as powerful game theoretic models to accommodate these competitive circumstances with aggregative interactions among multiple decision-makers. Motivated by the wide applications of aggregative games in distributed systems, Nash equilibrium seeking for aggregative games on communication graphs is attracting increasing attention in recent years \cite{KoshalOR16}-\cite{PariseCDC15}.

A discrete-time method was proposed in \cite{KoshalOR16} for networked aggregative games and a continuous-time counterpart was provided in \cite{YEcyber17} considering their applications for demand response in smart grids. A  gossip-based algorithm was proposed to achieve distributed Nash equilibrium seeking in an asynchronous fashion in \cite{Salehisadaghiani}. Coupled constraints were further addressed in \cite{LiangAT17}. The authors in \cite{PariseCDC15} considered  quadratic quasi-aggregative games, in which the players' objective functions depend on the players' actions and an aggregate of its neighbors' actions. From the perspective of privacy issues, the works in \cite{Cummings}\cite{ZhouJSA} shed some light on the privacy protection for aggregative games. However, in the mechanisms of the game, there is a mediator/weak mediator, that can receive information from the players and give suggested actions for the players, to induce the players' behaviours. Hence, the methods in \cite{Cummings}\cite{ZhouJSA} are not distributed. To achieve distributed Nash equilibrium seeking for networked games, the players usually need to broadcast their local information to their neighbors via local communication networks. The information dissemination among the players may raise privacy concern for the players. \textbf{Nevertheless, to the best of the authors' knowledge, privacy  issues have rarely been explored for distributed Nash equilibrium seeking schemes though it is a problem of significant interest.} Motivated by the above observation, this paper considers privacy-preserving distributed Nash equilibrium seeking for networked aggregative games by utilizing the notion of differential privacy \cite{Dwork}.

Differential privacy has been widely adopted to describe the privacy level in many situations that include sensitive information. For example, the differential privacy of the agents engaged in distributed optimization problems was established in \cite{Huang15}-\cite{DingCDC18}. In \cite{Huang15}, random noises were utilized for the protection of the agents' objective functions. Based on the notion of differential privacy, the tradeoff between the convergence accuracy and the privacy level was analyzed. In \cite{HanTAC17}, the constraints of the optimization problem were considered to be the private information to be protected. Based on the stochastic gradient method, a private optimization algorithm was developed by introducing additive noises.  Instead of probing the transmitted messages, the authors in \cite{Nozari} proposed a functional perturbation algorithm which ensures that the inaccuracy of the optimization algorithm is only resulted from the introduced noises. In \cite{DingCDC18}, both the agents' states and moving directions were masked by random noises to achieve the privacy protection of the agents' local objective functions. Differentially private average consensus protocols have also been widely studied (see e.g., \cite{MoTAC17}-\cite{Manitara13}, to mention just a few). For instance, the agents' initial states were considered to be private information to be protected in the average consensus problems in \cite{MoTAC17}, where the authors developed an asymptotically convergent algorithm to achieve privacy protection. Both a non-exact convergent algorithm and an almost surely convergent algorithm were established to achieve differentially private average consensus in \cite{NozariAT}. Moreover, differential privacy has been extensively investigated in the area of database and data mining. Interested readers are referred to \cite{Dwork} for a survey on differential privacy.

To shed some light on privacy-protected Nash equilibrium seeking for aggregative games under distributed communication networks, this paper proposes a distributed algorithm by perturbing the transmitted information among the players using random variables. Moreover, motivated by the observation that differential privacy is robust against the auxiliary information exposed to the adversary, we adopt it to describe the players' privacy level in this paper. In brief, the main contributions of the paper are summarized as follows.
\begin{enumerate}
  \item This paper considers privacy-preserving distributed Nash equilibrium seeking for networked aggregative games.  To achieve the goal, we employ a dynamic average consensus protocol for the distributed estimation of the players' aggregate actions. To protect the players' privacy, the information transmitted among the players in the consensus part is perturbed by independent random noises drawn from Laplace distributions. By utilizing the perturbed consensus protocol and a gradient method with a decaying stepsize, distributed privacy-preserving Nash equilibrium strategies are established for games under fixed and time-varying communication graphs, respectively.
  \item The convergence result and the privacy level of the proposed methods are analytically investigated. In particular, the mean square error bound and privacy parameter are explicitly quantified. The presented results illustrate that the privacy level depends on the selection of the initial stepsize, the decaying rate of the stepsize as well as the scaling parameter of the random variables.  Moreover,  it is shown that there is a tradeoff between the convergence accuracy and the privacy level.
\end{enumerate}

The rest of the paper is organized as follows. Section \ref{NP} provides some preliminaries and formulates the considered problem. The main results are given in Section \ref{main}, where the distributed algorithm is presented with its convergence accuracy and privacy level successively investigated. Moreover, the results under fixed communication topologies are extended to time-varying communication topologies in Section \ref{time}.  In Section \ref{num_ex}, a numerical example is provided to verify the effectiveness of the proposed method. Lastly, conclusions are drawn in Section \ref{conc}.

\emph{Notations:} In this paper, we use $\mathbb{R}$ and $\mathbb{R}_{+}$ to denote the set of real numbers and positive real numbers, respectively.  Moreover, $\mathbb{Z}^+$ denotes the set of non-negative integers. Let $v$ be a vector or matrix, then $||v||$ denotes the $\ell_2$-norm of $v$. Moreover, $||v||_{\infty}$ denotes the $\ell_{\infty}$-norm of $v$. We say that a random variable $w\sim Lap(b)$, where $b\in \mathbb{R}_{+}$, if its probability density function is
\begin{equation}\nonumber
\mathcal{L}(w,b)=\frac{1}{2b}e^{-\frac{|w|}{b}}.
\end{equation}
In addition, $\mathbb{E}(\Phi)$ is the expectation of $\Phi.$ The partial derivative of $f_i(\mathbf{x})$ with respect to $x_i$ is denoted as $\nabla_i f_i(\mathbf{x}),$ where $\mathbf{x}=[x_1,x_2,\cdots,x_N]^T.$ Moreover, $\mathbf{\mathbf{1}}_N$ denotes an $N$-dimensional column vector whose elements are all $1$ and the transpose of $\Psi$, where $\Psi$ is either a matrix or a vector, is denoted as $\Psi^T.$ The notation $[g_i]_{vec}$ for $i\in\{1,2,\cdots,N\}$ denotes a column vector whose $i$th element is $g_i.$ For a sequence $g(k)$, where $k\in\{1,2,3,\cdots\}$ and $g(k)$ is either a vector or a scalar, $g(\infty)=\lim_{k\rightarrow \infty}g(k)$ given that $\lim_{k\rightarrow \infty}g(k)$ exists. The maximum and minimum eigenvalues of a symmetric matrix $P$ are denoted as $\lambda_{max}(P)$ and $\lambda_{min}(P),$ respectively. For a matrix $Q$, $[Q]_{ij}$ denotes the entry on the $i$th row and $j$th column of $Q$. In addition, for $l_i\in\mathbb{R},i\in\{1,2,\cdots,N\},$ the maximum and minimum values of $l_i$ are denoted as $\max_{i\in\{1,2,\cdots,N\}}\{l_i\}$ and $\min_{\{1,2,\cdots,N\}}\{l_i\},$ respectively.

\section{Preliminaries and Problem Formulation}\label{NP}
\subsection{Preliminaries}
In the following, we provide some preliminaries on game theory and differential privacy.

\subsubsection{Game Theory}
The following game related definitions are adopted from \cite{YEcyber17}.
\begin{definition}
\emph{(A Normal Form Game)} A game in a normal form is defined as a triple $\Gamma=\{\mathcal{V},X,f\},$ where $\mathcal{V}=\{1,2,\cdots,N\}$ is the set of players, $X=X_1\times X_2\cdots \times X_N, X_i\subseteq \mathbb{R}$ is the action set of player $i$ and $f=(f_1,f_2,\cdots,f_N),$ where $f_i$ is the cost function of player $i$.
\end{definition}
\begin{definition}
\emph{(Nash Equilibrium)} Nash equilibrium is an action profile on which no player can reduce its cost by unilaterally changing its own action, i.e., an action profile $\mathbf{x}^*=(x_i^*,\mathbf{x}_{-i}^*)\in X$ is a Nash equilibrium if for $i\in\mathcal{V},$
\begin{equation}
f_i(x_i^*,\mathbf{x}_{-i}^*)\leq f_i(x_i,\mathbf{x}_{-i}^*),
\end{equation}
for $x_i\in X_i,$ where $\mathbf{x}_{-i}=[x_1,x_2,\cdots,x_{i-1},x_{i+1},\cdots,x_N]^T.$
\end{definition}

\begin{definition}
(\emph{Aggregative Game}) A game $\Gamma$ is aggregative if there exists an aggregative function  $l(\mathbf{x}):X\in \mathbb{R}$, which is  continuous, additive and separable, such that there are functions $\tilde{f}_i(x_i,l(\mathbf{x})),i\in\mathcal{V}$ that satisfy
\begin{equation}
\tilde{f}_i(x_i,l(\mathbf{x}))=f_i(x_i,\mathbf{x}_{-i}),\forall \mathbf{x}\in X.
\end{equation}
\end{definition}

Without loss of generality, we consider that $l(\mathbf{x})=\frac{1}{N}\sum_{i=1}^N x_i$ in this paper.

\subsubsection{Differential privacy}
Differential privacy serves as a mathematical quantification on the level of the engaged individuals' privacy guarantee in a statistical database.  It provides a rigorous and formal mathematical formulation on the privacy of the sensitive data. We refer readers to \cite{Dwork} for more detailed elaborations on differential privacy and give the subsequent definitions for clarity of presentation.

\begin{definition}
\emph{(Adjacency)} Two function sets $F^{(1)}=\{f_i^{(1)}\}_{i=1}^N$, $F^{(2)}=\{f_i^{(2)}\}_{i=1}^N$ are said to be adjacent if there exists some $i_0\in\mathcal{V}$ such that $f_i^{(1)}=f_i^{(2)},\forall i\neq i_0,$
and $f_{i_0}^{(1)}\neq f_{i_0}^{(2)}$ \cite{Huang15}\cite{DingCDC18}.
\end{definition}

\begin{definition}
\emph{($\epsilon$-Differential Privacy)} Given a positive constant $\epsilon,$ adjacent function sets $F^{(1)}$, $F^{(2)}$ and any observation $\mathcal{O},$ the algorithm is $\epsilon$-differentially private if
\begin{equation}
\mathbb{P}\{F^{(1)}|\mathcal{O}\}\leq e^{\epsilon} \mathbb{P}\{F^{(2)}|\mathcal{O}\},
\end{equation}
where $\mathbb{P}\{F^{(j)}|\mathcal{O}\}$ for $j\in\{1,2\}$ is the conditional probability representing the probability of inferring $F^{(j)}$ from the observation $\mathcal{O}$ \cite{Huang15}\cite{DingCDC18}.
\end{definition}

\begin{Remark}
The above defined $\epsilon$-differential privacy illustrates that from the sequences of observations, the adversary could not distinguish between the two function sets with a high probability. Hence, it is challenging for the adversary to identify the players' sensitive information, which further indicates that the players are protected from information leakage. Note that a smaller $\epsilon$ indicates a higher level privacy.
\end{Remark}

\subsection{Problem formulation}
\begin{Problem}
Consider an aggregative game in which player $i$ intends to
\begin{equation}\label{eqform}
\text{min}_{x_i}\ \ f_i(x_i,\mathbf{x}_{-i})
\end{equation}
where
\begin{equation}
f_i(x_i,\mathbf{x}_{-i})=\tilde{f}_i(x_i,\bar{x}),
\end{equation}
and $\bar{x}=\frac{1}{N}\sum_{j=1}^N x_j$
for $i\in\mathcal{V}.$ Suppose that there is no central authority to broadcast $\bar{x}$ and the players can communicate with each other via a communication graph $\mathcal{G},$ the objective of this paper is to design a Nash equilibrium seeking strategy for the aggregative game such that
\begin{enumerate}
  \item Given any positive constant $\epsilon$, the strategy can be $\epsilon$-differentially private by tuning the control parameters.
  \item The players' actions can be driven to a neighborhood of the Nash equilibrium point in the mean square sense, $\lim_{k\rightarrow\infty}\mathbb{E}(||\mathbf{x}(k)-\mathbf{x}^*||^2)\leq D$, where $D$ is a positive constant that is as small as possible;
\end{enumerate}
\end{Problem}

\begin{Remark}
Note that the considered aggregative games are practically inspired by the fact that in many decision-making processes (e.g., energy consumption control, Cournot quantity competitions, public good provision, factory production, to mention just a few), the decision-makers affect the others via their averaged/aggregate behaviors. Moreover, different from the previous works on distributed Nash equilibrium seeking for aggregative games \cite{KoshalOR16}-\cite{PariseCDC15}, \textbf{the objective of this paper includes privacy protection for the players' cost functions $f_i(x_i,\mathbf{x}_{-i}),i\in\mathcal{V}$.}
\end{Remark}

For notational convenience, let $\mathbf{x}=[x_1,x_2,\cdots,x_N]^T$ and with a slight abuse of notation, $f_i(x_i,\mathbf{x}_{-i})$ and $\nabla_i f_i(x_i,\mathbf{x}_{-i})$ might be written as $f_i(\mathbf{x})$ and $\nabla_i f_i(\mathbf{x})$, respectively in the rest of the paper.

The following assumptions (see e.g., \cite{YETAC17}) will be utilized to establish the main results of the paper.

\begin{Assumption}\label{a2}
The players' objective functions are twice continuously differentiable functions and $\nabla_i f_i(\mathbf{x})$ is globally Lipschitz for $i\in\mathcal{V}$, i.e., there exists a positive constant $l_i$ such that
\begin{equation}
||\nabla_i f_i(\mathbf{x})-\nabla_i f_i(\mathbf{y})||\leq l_i||\mathbf{x}-\mathbf{y}||,
\end{equation}
for $\mathbf{x},\mathbf{y}\in \mathbb{R}^N,i\in\mathcal{V}.$
\end{Assumption}

\begin{Assumption}\label{a3}
There exists a positive constant $m$ such that for $\mathbf{x},\mathbf{y}\in \mathbb{R}^N,$
\begin{equation}
(\mathbf{x}-\mathbf{y})^T (g(\mathbf{x})-g(\mathbf{y}))\geq m||\mathbf{x}-\mathbf{y}||^2,
\end{equation}
where $g(\mathbf{x})=\left[\nabla_1 f_1(\mathbf{x}),\nabla_2 f_2(\mathbf{x}),\cdots,\nabla_N f_N(\mathbf{x})\right]^T.$
\end{Assumption}

\begin{Remark}
The strong monotonicity condition in Assumption \ref{a3} characterizes a global Nash equilibrium, i.e., the Nash equilibrium is unique under Assumption \ref{a3}. In addition, by Assumption \ref{a3},
\begin{equation}
g(\mathbf{x})=\mathbf{0}_N,
\end{equation}
if and only if $\mathbf{x}=\mathbf{x}^*.$
Note that it is a widely adopted assumption in the existing literature and we refer interested readers to  \cite{Facchinei03} for more insights on the assumption.
\end{Remark}

\begin{Assumption}\label{ass}
$\nabla_i f_i(\mathbf{x})$ for $i\in\mathcal{V}$ are uniformly bounded, i.e., there exists a positive constant $C$ such that for $\mathbf{x}\in \mathbb{R}^N,$
$|\nabla_i f_i(\mathbf{x})|\leq C,$ $\forall i\in\mathcal{V}.$
\end{Assumption}

\section{Privacy-preserving Nash equilibrium seeking under fixed communication graphs}\label{main}
In this section, a privacy-preserving distributed Nash equilibrium seeking strategy will be proposed for games under a fixed undirected communication graph $\mathcal{G}$, which is defined as $\mathcal{G}=\{\mathcal{V},\mathcal{E}\}.$ Moreover, $\mathcal{V}=\{1,2,\cdots,N\}$ denotes the set of vertices and $\mathcal{E}\subseteq \mathcal{V}\times \mathcal{V}$ is the set of edges. In this way, player $j$ can communicate with player $i$ if and only if $(j,i)\in \mathcal{E}.$  Associate with $\mathcal{G}$ a weight matrix $\mathcal{A}=[a_{ij}]$ whose element on the $i$th row and $j$th column is $a_{ij}$. In this section,  we suppose that $\mathcal{G}$ and $\mathcal{A}$ satisfy the following assumption:
\begin{Assumption}\label{comm}
The communication graph $\mathcal{G}$ is undirected and connected. Moreover, $\mathcal{A}$ satisfies
 \begin{enumerate}
   \item $a_{ii}>0,$ $a_{ij}>0$ if $(j,i)\in \mathcal{E}$ and $a_{ij}=0$ if $(j,i)\notin \mathcal{E}.$
   \item $\mathcal{A}$ is doubly stochastic, i.e., $\mathcal{A}\mathbf{1}_N=\mathbf{1}_N$ and $\mathbf{1}_N^T\mathcal{A}=\mathbf{1}_N^T$.
 \end{enumerate}
\end{Assumption}

Let $\{\lambda_i\}_{i=1}^N$ be the eigenvalues of $\mathcal{A}$ and suppose that $\lambda_1\geq \lambda_2\geq \cdots, \geq \lambda_N.$ Then, by Assumption \ref{comm}, $\lambda_1=1,$ $\lambda_2<1$, $\lambda_N>-1$.
Moreover, the following lemma holds.
\begin{Lemma}\label{lemam1}
\cite{Xiao}\cite{Johansson} Suppose that Assumption \ref{comm} is satisfied. Then,
\begin{equation}
\left|\left|\left(\mathcal{A}^k-\frac{\mathbf{1}_N\mathbf{1}_N^T}{N}\right)\right|\right|=\left|\left|\left(\mathcal{A}-\frac{\mathbf{1}_N\mathbf{1}_N^T}{N}\right)^k\right|\right|
\leq \gamma^k,
\end{equation}
where $\gamma$ is a positive constant such that
\begin{equation}
\rho\left(\mathcal{A}-\frac{\mathbf{1}_N\mathbf{1}_N^T}{N}\right)\leq \gamma <1,
\end{equation}
and $\rho\left(\mathcal{A}-\frac{\mathbf{1}_N\mathbf{1}_N^T}{N}\right)$ is the spectral radius of $\mathcal{A}-\frac{\mathbf{1}_N\mathbf{1}_N^T}{N}.$
\end{Lemma}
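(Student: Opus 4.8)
The plan is to exploit the symmetry of $\mathcal{A}$ (which, for the undirected graph of Assumption \ref{comm}, means $a_{ij}=a_{ji}$, so $\mathcal{A}=\mathcal{A}^T$) together with the projection structure of $J:=\frac{\mathbf{1}_N\mathbf{1}_N^T}{N}$. First I would record the three elementary identities that drive everything. Since $\mathcal{A}\mathbf{1}_N=\mathbf{1}_N$ and $\mathbf{1}_N^T\mathcal{A}=\mathbf{1}_N^T$, one checks directly that $\mathcal{A}J=J\mathcal{A}=J$; and since $J$ is the orthogonal projection onto $\mathrm{span}\{\mathbf{1}_N\}$, it satisfies $J^2=J$.

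Next I would establish the algebraic identity $\mathcal{A}^k-J=(\mathcal{A}-J)^k$ by induction on $k$. The case $k=1$ is immediate. For the inductive step I expand $(\mathcal{A}-J)^{k+1}=(\mathcal{A}^k-J)(\mathcal{A}-J)=\mathcal{A}^{k+1}-\mathcal{A}^kJ-J\mathcal{A}+J^2$, and then use $\mathcal{A}^kJ=J$ (obtained by iterating $\mathcal{A}J=J$), together with $J\mathcal{A}=J$ and $J^2=J$; the three cross terms collapse to a single $-J$, leaving $\mathcal{A}^{k+1}-J$. This yields the first equality in the statement.

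For the norm bound, the key observation is that $M:=\mathcal{A}-J$ is symmetric, so its $\ell_2$-norm equals its spectral radius and $\|M^k\|=\rho(M)^k$ (diagonalize $M=Q\Lambda Q^T$, whence $M^k=Q\Lambda^kQ^T$ and $\|M^k\|=\max_i|\lambda_i|^k$). To pin down $\rho(M)$, I would split $\mathbb{R}^N$ into $\mathrm{span}\{\mathbf{1}_N\}$ and its orthogonal complement: on $\mathbf{1}_N$ we have $M\mathbf{1}_N=\mathbf{1}_N-\mathbf{1}_N=0$, so $0$ is an eigenvalue; on any $v\perp\mathbf{1}_N$ we have $Jv=0$, hence $Mv=\mathcal{A}v$, so the remaining eigenvalues of $M$ are exactly $\lambda_2,\ldots,\lambda_N$. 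Using the spectral facts stated just before the lemma, namely $\lambda_2<1$ and $\lambda_N>-1$, each of these lies strictly inside $(-1,1)$, giving $\rho(M)=\max\{|\lambda_2|,|\lambda_N|\}<1$. Choosing any $\gamma$ with $\rho(M)\le\gamma<1$ then produces $\|(\mathcal{A}-J)^k\|=\rho(M)^k\le\gamma^k$.

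The only genuinely load-bearing step is the strict inequality $\rho(M)<1$; everything else is bookkeeping with the projection identities and the symmetry of $\mathcal{A}$. Since the required eigenvalue bounds $\lambda_2<1$ and $\lambda_N>-1$ are already furnished by Assumption \ref{comm} (connectedness and positive self-weights), no extra spectral analysis is needed, and the conclusion follows.
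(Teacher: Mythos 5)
Your proof is correct. The paper itself gives no proof of this lemma --- it is stated with citations to [Xiao] and [Johansson] --- and your argument is exactly the standard one from those sources: the commutation identities $\mathcal{A}J=J\mathcal{A}=J$, $J^2=J$ give $\mathcal{A}^k-J=(\mathcal{A}-J)^k$ by induction, and symmetry of $\mathcal{A}-J$ turns the norm of the $k$th power into the $k$th power of the spectral radius, which is $\max\{|\lambda_2|,|\lambda_N|\}<1$ by the eigenvalue facts recorded just before the lemma. The only caveat worth flagging is that Assumption \ref{comm} does not literally state $\mathcal{A}=\mathcal{A}^T$; ``undirected'' only forces $a_{ij}>0\Leftrightarrow a_{ji}>0$, not $a_{ij}=a_{ji}$. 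Your argument (and indeed the lemma's claim that $\Vert(\mathcal{A}-J)^k\Vert\leq\gamma^k$ for \emph{all} $k$ with $\gamma\geq\rho$) genuinely needs symmetry, but the paper itself implicitly assumes it by ordering the eigenvalues of $\mathcal{A}$ as real numbers, so you are relying on nothing more than the authors do.
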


\subsection{Method development}
As there is no central authority to broadcast $\bar{x}$ to the players, we design a strategy by utilizing a synthesis of consensus protocols and optimization algorithms as in \cite{KoshalOR16}-\cite{YEcyber17}. Moreover, to protect the players' privacy, we utilize independent random variables to perturb the transmitted information among the players in the average consensus protocol. More specifically, each player $i,i\in\mathcal{V}$ can update its action according to
\begin{equation}\label{al1}
x_i(k+1)=x_i(k)-\alpha_kg_i(x_i(k),y_i(k)),
\end{equation}
where $k\in \mathbb{Z}^+$, $\alpha_k=cq^k$, $c\in \mathbb{R}_+,$ $q\in(0,1)$ and $g_i(x_i,y_i)=\left(\frac{\partial \tilde{f}_i(x_i,\bar{x})}{\partial x_i}+\frac{\partial \tilde{f}_i(x_i,\bar{x})}{\partial \bar{x}}\frac{\partial \bar{x}}{\partial x_i}\right)\left.\right|_{\bar{x}=y_i}$. Moreover, $y_i(k)$ is an intermediate variable updated according to
\begin{equation}\label{al2}
y_i(k+1)=\sum_{j=1}^{N}a_{ij}p_j(k)+x_i(k+1)-x_i(k),
\end{equation}
in which $y_i(0)=x_i(0)$, $p_i(k)=y_i(k)+w_i(k)$, $w_i(k)\sim Lap(\theta_{k})$ for $i\in\mathcal{V}$ are independent random variables, $\theta_{k}=d \bar{q}^k$, $d\in \mathbb{R}_+$ and $\bar{q}\in (q,1)$.
The steps of the designed algorithm are described as follows.\\
\ \\
\begin{tabular}{l}
\toprule
Privacy-preserving distributed Nash seeking:\\
\midrule
\textbf{Initialization:} Choose $x_i(0)\in \mathbb{R}$ and $y_i(0)= x_i(0)$.\\
\textbf{Iterations:}\\
      1. Define $p_i(k)=y_i(k)+w_i(k)$ \\
      2. Update $x_i(k)$ according to\\
         $\ \ \ \ x_i(k+1)=x_i(k)-\alpha_k g_i(x_i(k),y_i(k))$\\
      3. Update $y_i(k)$ according to\\
         $\ \ \ \ y_i(k+1)=\sum_{j=1}^{N}a_{ij}p_j(k)+x_i(k+1)-x_i(k)$\\
   \textbf{end}\\
\bottomrule
\end{tabular}
\\

Let $\mathbf{x}(k)=[x_1(k),x_2(k),\cdots,x_N(k)]^T,\mathbf{y}(k)=[y_1(k),y_2(k),\cdots,y_N(k)]^T,$ $\mathbf{w}(k)=[w_1(k),w_2(k),\cdots,w_N(k)]^T$ and $\mathbf{p}(k)=[p_1(k),p_2(k),\cdots,p_N(k)]^T.$
Then, the concatenated vector form of \eqref{al1}-\eqref{al2} is
\begin{equation}\label{cotn}
\begin{aligned}
\mathbf{x}(k+1)&=\mathbf{x}(k)-\alpha_k [g_i(x_i(k),y_i(k))]_{vec}\\
\mathbf{y}(k+1)&=\mathcal{A}\mathbf{p}(k)+\mathbf{x}(k+1)-\mathbf{x}(k).
\end{aligned}
\end{equation}

\begin{Remark}
In this paper, we suppose that there is no central authority to broadcast $\bar{x}(k)$ to the players. Hence, player $i,i\in\mathcal{V}$ would generate a local variable $y_i(k)$ to estimate $\bar{x}(k)$. Moreover, the update of $y_i(k)$ in \eqref{al2} is motivated by the dynamic average consensus protocol in \cite{YEcyber17}\cite{ZhuAT10}. However, in the dynamic average consensus protocol of \cite{YEcyber17}, the players communicate with their neighbors on their estimates of $\bar{x}(k),$ which may raise privacy concern. Hence, in \eqref{al2}, we perturb their transmitted information by utilizing independent random variables.  Moreover, different from the continuous-time scenario proposed in our previous work in \cite{YEcyber17}, we adopt a decaying stepsize in the presented algorithm to release the side-effect of the random noises on the convergence properties to some extent.
\end{Remark}

\begin{Remark}
In \cite{HeACC17}, the authors provided necessary and sufficient conditions for general noise adding mechanisms to achieve differential privacy. It was shown that the probability density function of the added noises should have zero measure for the set of zero-points and the relative probability density in the considered adjacent sets should be upper bounded by a positive constant (see the conditions in Theorem 3.1 of \cite{HeACC17} for accurate mathematical descriptions on the conditions). In addition, Laplace distribution was shown to satisfy the given conditions. Hence, we follow the existing works to adopt noises drawn from Laplace distributions with a decaying parameter to mask the transmitted information.
\end{Remark}


\subsection{Analysis on the disagreement of estimates}
In this section, we provide a bound for the expectation of the absolute difference between $y_i(k)$ and the actual aggregate action $\bar{x}(k).$
The following lemma is given to support the quantification of the estimation error.
\begin{Lemma}\label{lemm}
Suppose that Assumption \ref{comm} is satisfied. Then, for each nonnegative integer $k,$
\begin{equation}
\mathbb{E}(|\mathbf{1}_N^T\mathbf{y}(k)-\mathbf{1}_N^T\mathbf{x}(k)|)\leq \frac{Nd(1-\bar{q}^{k})}{1-\bar{q}}.
\end{equation}
\end{Lemma}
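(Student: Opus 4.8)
The plan is to track the scalar disagreement $s(k) \triangleq \mathbf{1}_N^T\mathbf{y}(k) - \mathbf{1}_N^T\mathbf{x}(k)$ and show that it obeys a simple telescoping recursion driven solely by the injected noise. First I would substitute $\mathbf{p}(k)=\mathbf{y}(k)+\mathbf{w}(k)$ into the consensus update in \eqref{cotn} to obtain
\begin{equation}\nonumber
\mathbf{y}(k+1)=\mathcal{A}\mathbf{y}(k)+\mathcal{A}\mathbf{w}(k)+\mathbf{x}(k+1)-\mathbf{x}(k),
\end{equation}
and then left-multiply by $\mathbf{1}_N^T$. Here the doubly stochastic property from Assumption \ref{comm} is the crucial ingredient: since $\mathbf{1}_N^T\mathcal{A}=\mathbf{1}_N^T$, the matrix $\mathcal{A}$ drops out of both the $\mathbf{y}(k)$ and the $\mathbf{w}(k)$ terms, yielding
\begin{equation}\nonumber
\mathbf{1}_N^T\mathbf{y}(k+1)=\mathbf{1}_N^T\mathbf{y}(k)+\mathbf{1}_N^T\mathbf{w}(k)+\mathbf{1}_N^T\mathbf{x}(k+1)-\mathbf{1}_N^T\mathbf{x}(k).
\end{equation}

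The key simplification is that the gradient-driven increment $\mathbf{x}(k+1)-\mathbf{x}(k)$ appears identically in the $\mathbf{x}$-update and the $\mathbf{y}$-update, so under the $\mathbf{1}_N^T$ projection it cancels exactly, leaving the clean recursion $s(k+1)=s(k)+\mathbf{1}_N^T\mathbf{w}(k)$. Because the algorithm is initialized with $y_i(0)=x_i(0)$ for every $i$, we have $s(0)=0$, and telescoping gives the closed form $s(k)=\sum_{t=0}^{k-1}\mathbf{1}_N^T\mathbf{w}(t)=\sum_{t=0}^{k-1}\sum_{i=1}^N w_i(t)$. This exhibits the aggregate disagreement as a pure accumulation of the masking noise, independent of the decision dynamics.

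The final step is to bound $\mathbb{E}(|s(k)|)$. I would apply the triangle inequality together with linearity of expectation to get $\mathbb{E}(|s(k)|)\le \sum_{t=0}^{k-1}\sum_{i=1}^N \mathbb{E}(|w_i(t)|)$, and then use the fact that a $\mathrm{Lap}(b)$ variable has mean absolute value $b$, so $\mathbb{E}(|w_i(t)|)=\theta_t=d\bar{q}^t$. Summing over the $N$ players and the geometric series in $t$ yields
\begin{equation}\nonumber
\mathbb{E}(|s(k)|)\le N d\sum_{t=0}^{k-1}\bar{q}^t=\frac{Nd(1-\bar{q}^k)}{1-\bar{q}},
\end{equation}
which is the claimed bound. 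There is no serious obstacle in this argument; the only point requiring care is recognizing that the double-stochasticity and the matching $\mathbf{x}$-increment in \eqref{cotn} together force the consensus map to preserve the running sum up to additive noise (the dynamic average consensus property), so that the otherwise coupled nonlinear gradient terms do not enter the aggregate at all.
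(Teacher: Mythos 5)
Your proposal is correct and follows essentially the same route as the paper's proof: left-multiplying the consensus update by $\mathbf{1}_N^T$, using double stochasticity and the cancellation of the $\mathbf{x}$-increment to reduce the aggregate disagreement to an accumulation of the noise, and then bounding via the triangle inequality and $\mathbb{E}(|w_i(t)|)=d\bar{q}^t$. The only cosmetic difference is that you telescope the exact equality before taking expectations, whereas the paper iterates the inequality on expectations directly; the content is identical.
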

\begin{proof}
By \eqref{cotn}, we see that
\begin{equation}\label{ex}
\mathbf{1}_N^T\mathbf{y}(k+1)=\mathbf{1}_N^T (\mathbf{y}(k)+\mathbf{w}(k))+\mathbf{1}_N^T(\mathbf{x}(k+1)-\mathbf{x}(k)).
\end{equation}
Hence,
\begin{equation}
\begin{aligned}
&\mathbf{1}_N^T\mathbf{y}(k+1)-\mathbf{1}_N^T\mathbf{x}(k+1)\\
=&\mathbf{1}_N^T (\mathbf{y}(k)-\mathbf{x}(k))+\mathbf{1}_N^T\mathbf{w}(k),
\end{aligned}
\end{equation}
and
\begin{equation}\label{expp}
\begin{aligned}
&|\mathbf{1}_N^T\mathbf{y}(k+1)-\mathbf{1}_N^T\mathbf{x}(k+1)|\\
\leq &|\mathbf{1}_N^T (\mathbf{y}(k)-\mathbf{x}(k))|+|\mathbf{1}_N^T\mathbf{w}(k)|.
\end{aligned}
\end{equation}
Taking expectations on both sides of \eqref{expp} gives
\begin{equation}
\begin{aligned}
&\mathbb{E}(|\mathbf{1}_N^T\mathbf{y}(k+1)-\mathbf{1}_N^T\mathbf{x}(k+1)|)\\
\leq & \mathbb{E}(|\mathbf{1}_N^T (\mathbf{y}(k)-\mathbf{x}(k))|)+\mathbb{E}(|\mathbf{1}_N^T\mathbf{w}(k)|)\\
\leq &\mathbb{E}(|\mathbf{1}_N^T (\mathbf{y}(0)-\mathbf{x}(0))|) +\sum_{j=0}^k \mathbb{E}(|\mathbf{1}_N^T\mathbf{w}(j)|).
\end{aligned}
\end{equation}
Noticing that $\mathbf{y}(0)=\mathbf{x}(0),$
\begin{equation}
\begin{aligned}
&\mathbb{E}(|\mathbf{1}_N^T\mathbf{y}(k+1)-\mathbf{1}_N^T\mathbf{x}(k+1)|)\\
\leq &\sum_{j=0}^k \mathbb{E}(|\mathbf{1}_N^T\mathbf{w}(j)|)= \frac{Nd (1-\bar{q}^{k+1})}{1-\bar{q}},
\end{aligned}
\end{equation}
where the last inequality is derived by utilizing $\mathbb{E}(|w_i(j)|)=d\bar{q}^j$ for $i\in\mathcal{V}.$
\end{proof}

\begin{Remark}
Actually, by mathematical induction, it can be obtained that
\begin{equation}
\mathbb{E}(\mathbf{1}_N^T\mathbf{y}(k)-\mathbf{1}_N^T\mathbf{x}(k))=0,
\end{equation}
for all nonnegative integer $k$. However, due to the effect of the added noises, we can only conclude that  $\mathbb{E}(|\mathbf{1}_N^T\mathbf{y}(k)-\mathbf{1}_N^T\mathbf{x}(k)|)$ is bounded by $\frac{Nd(1-\bar{q}^{k})}{1-\bar{q}}$ as indicated in Lemma \ref{lemm}. The bias of $\mathbb{E}(|\mathbf{1}_N^T\mathbf{y}(k)-\mathbf{1}_N^T\mathbf{x}(k)|)$ would result in a convergence error as indicated in the upcoming theorems.
\end{Remark}

Based on Lemma \ref{lemm}, the following theorem which establishes the estimation error bound can be obtained.
%

\begin{Theorem}\label{th3}
Suppose that Assumptions \ref{ass}-\ref{comm} are satisfied. Then, for each positive integer $k,$
\begin{equation}
\begin{aligned}
&\mathbb{E}(|y_i(k)-\bar{x}(k)|)\\
\leq &\frac{2\sqrt{N}(N-1)C_1}{N}\gamma^{k}+\frac{2(N-1)\sqrt{N} d\gamma}{(\gamma-\bar{q})N}(\gamma^{k}-\bar{q}^{k})\\
&+\frac{2 (N-1)\sqrt{N} C c  (\gamma^{k}-q^{k})}{(\gamma-q)N}+\frac{d (1-\bar{q}^{k})}{1-\bar{q}},
\end{aligned}
\end{equation}
where $i\in\mathcal{V},$ and $C_1=\max_{n\in\mathcal{V}}|y_n(0)|,$ $n\in\mathcal{V}.$
\end{Theorem}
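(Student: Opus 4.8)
The plan is to split the estimation error into a consensus-disagreement part and an averaging-bias part,
\[
y_i(k)-\bar{x}(k)=\big(y_i(k)-\bar{y}(k)\big)+\big(\bar{y}(k)-\bar{x}(k)\big),
\]
where $\bar{y}(k)=\frac{1}{N}\mathbf{1}_N^T\mathbf{y}(k)$, and to bound the two parts separately. The averaging-bias part is immediate from Lemma \ref{lemm}: since $\bar{y}(k)-\bar{x}(k)=\frac{1}{N}(\mathbf{1}_N^T\mathbf{y}(k)-\mathbf{1}_N^T\mathbf{x}(k))$, dividing the bound of Lemma \ref{lemm} by $N$ gives $\mathbb{E}|\bar{y}(k)-\bar{x}(k)|\le \frac{d(1-\bar{q}^k)}{1-\bar{q}}$, which is exactly the last term of the claimed bound.

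For the consensus-disagreement part I would introduce the averaging projector $M=I-\frac{1}{N}\mathbf{1}_N\mathbf{1}_N^T$ and the centered matrix $\bar{\mathcal{A}}=\mathcal{A}-\frac{1}{N}\mathbf{1}_N\mathbf{1}_N^T$, and set $\mathbf{y}_\perp(k)=M\mathbf{y}(k)$, so that $|y_i(k)-\bar{y}(k)|\le \|\mathbf{y}_\perp(k)\|$. Using $\mathbf{p}(k)=\mathbf{y}(k)+\mathbf{w}(k)$, the double stochasticity of $\mathcal{A}$ (Assumption \ref{comm}), and the identities $M\mathcal{A}=\bar{\mathcal{A}}$ and $\bar{\mathcal{A}}M=\bar{\mathcal{A}}$, I would derive from the second line of \eqref{cotn} the linear recursion
\[
\mathbf{y}_\perp(k+1)=\bar{\mathcal{A}}\,\mathbf{y}_\perp(k)+\bar{\mathcal{A}}\,\mathbf{w}(k)-\alpha_k M[g_i(x_i(k),y_i(k))]_{vec}.
\]
Unrolling this from $k=0$ (with $\mathbf{y}_\perp(0)=M\mathbf{x}(0)$ because $\mathbf{y}(0)=\mathbf{x}(0)$), taking norms, and invoking Lemma \ref{lemam1} (so $\|\bar{\mathcal{A}}^m\|\le\gamma^m$, and using $\bar{\mathcal{A}}^m M=\bar{\mathcal{A}}^m$ to reinsert $M$ on the noise terms) yields
\[
\|\mathbf{y}_\perp(k)\|\le \gamma^k\|\mathbf{y}_\perp(0)\|+\sum_{j=0}^{k-1}\gamma^{k-j}\|M\mathbf{w}(j)\|+\sum_{j=0}^{k-1}\alpha_j\gamma^{k-1-j}\|M[g_i(x_i(j),y_i(j))]_{vec}\|.
\]

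The remaining work is to estimate the three terms and resum. For each of $\mathbf{y}_\perp(0)$, $M\mathbf{w}(j)$ and $M[g_i]_{vec}$ I would apply the componentwise centering bound $|v_i-\frac{1}{N}\sum_n v_n|\le \frac{1}{N}\sum_{n\neq i}(|v_i|+|v_n|)$ together with $\|\cdot\|\le\sqrt{N}\|\cdot\|_\infty$: the initial states are bounded by $C_1$, giving $\|\mathbf{y}_\perp(0)\|\le\frac{2\sqrt{N}(N-1)C_1}{N}$; the pseudo-gradients satisfy $|g_i(x_i,y_i)|\le C$ by Assumption \ref{ass}, giving $\|M[g_i]_{vec}\|\le\frac{2\sqrt{N}(N-1)C}{N}$ surely; and the Laplace noises satisfy $\mathbb{E}|w_i(j)|=\theta_j=d\bar{q}^j$, giving $\mathbb{E}\|M\mathbf{w}(j)\|\le\frac{2\sqrt{N}(N-1)}{N}d\bar{q}^j$. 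Substituting $\alpha_j=cq^j$ and using the geometric identities $\sum_{j=0}^{k-1}\gamma^{k-1-j}q^j=\frac{\gamma^k-q^k}{\gamma-q}$ and $\sum_{j=0}^{k-1}\gamma^{k-j}\bar{q}^j=\frac{\gamma(\gamma^k-\bar{q}^k)}{\gamma-\bar{q}}$ turns the three sums into exactly the first three terms of the claimed bound.

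The main obstacle is the noise term: extracting precisely the coefficient $\frac{2\sqrt{N}(N-1)}{N}$ for $\mathbb{E}\|M\mathbf{w}(j)\|$ is delicate, since the crude $\ell_1$ estimate $\|\cdot\|\le\|\cdot\|_1$ loses a factor $\sqrt{N}$, whereas the $\ell_\infty$ route invites an $\mathbb{E}[\max_i(\cdot)]$ that cannot be pulled inside the maximum; the argument must combine the centering inequality with the moment $\mathbb{E}|w_i(j)|=\theta_j$ carefully. The secondary care point is verifying the projection algebra ($M\mathcal{A}=\bar{\mathcal{A}}$, $\bar{\mathcal{A}}M=\bar{\mathcal{A}}$, hence $\bar{\mathcal{A}}^m M=\bar{\mathcal{A}}^m$), so that the contraction factor $\gamma$ of Lemma \ref{lemam1} applies to every summand; note that only Assumptions \ref{ass} and \ref{comm} are needed, as the Lipschitz and strong-monotonicity conditions play no role in this estimation-error bound.
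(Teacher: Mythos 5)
Your skeleton coincides with the paper's: the paper also splits $y_i(k)-\bar{x}(k)$ into a disagreement part and the bias $\frac{1}{N}(\mathbf{1}_N^T\mathbf{y}(k)-\mathbf{1}_N^T\mathbf{x}(k))$ handled by Lemma \ref{lemm} (it writes the disagreement as $\frac{1}{N}\sum_{j\neq i}|y_i(k)-y_j(k)|$, which is just another upper bound for your $|y_i(k)-\bar y(k)|$), unrolls the same recursion, and resums the same three geometric series. Your deterministic terms come out with the right constants. The one genuine gap is exactly the one you flag and do not close: your formulation requires a bound on $\mathbb{E}\|M\mathbf{w}(j)\|$, the expected Euclidean norm of a \emph{random} vector, and neither the $\ell_1$ route (which costs an extra $\sqrt{N}$) nor the $\ell_\infty$ route (which produces $\mathbb{E}[\max_n|w_n(j)|]\neq\max_n\mathbb{E}|w_n(j)|$) yields the coefficient $\frac{2\sqrt{N}(N-1)}{N}$. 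As written, the proof does not establish the stated constant for the noise contribution.

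The paper avoids the problem by never taking the norm of a random vector. It bounds the scalar $|y_i(k+1)-y_j(k+1)|$ entrywise, so the noise enters as $\sum_{n=1}^N|[\mathcal{A}^{k+1-l}]_{in}-[\mathcal{A}^{k+1-l}]_{jn}|\,|w_n(l)|$, a nonnegative linear combination with \emph{deterministic} coefficients; its expectation is computed exactly by linearity as $d\bar{q}^{\,l}\sum_n|[\mathcal{A}^{k+1-l}]_{in}-[\mathcal{A}^{k+1-l}]_{jn}|$, and the factor $\sqrt{N}$ is obtained by applying the norm equivalence to the deterministic matrix, $\sum_n|[\mathcal{A}^m]_{in}-\frac1N|\le\|\mathcal{A}^m-\frac1N\mathbf{1}_N\mathbf{1}_N^T\|_\infty\le\sqrt{N}\,\|\mathcal{A}^m-\frac1N\mathbf{1}_N\mathbf{1}_N^T\|\le\sqrt{N}\gamma^m$, so each row-difference sum is at most $2\sqrt{N}\gamma^m$; averaging over the $N-1$ indices $j\neq i$ then produces the $(N-1)/N$. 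If you prefer to keep your vector formulation, the gap is repairable: by Jensen and independence, $\mathbb{E}\|M\mathbf{w}(j)\|\le\sqrt{\mathbb{E}\|M\mathbf{w}(j)\|^2}=\sqrt{2(N-1)}\,\theta_j$ (using $\mathrm{Var}(Lap(b))=2b^2$ and $\mathrm{tr}(M)=N-1$), and $\sqrt{2(N-1)}\le\frac{2\sqrt{N}(N-1)}{N}$ for every $N\ge 2$, so your route also delivers the claimed bound once this step is supplied.
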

\begin{proof}
For each positive integer $k$,
\begin{equation}
\begin{aligned}
&\mathbb{E}(|y_i(k)- \bar{x}(k)|)\\
= &\mathbb{E}(|y_i(k)-\frac{\mathbf{1}_N^T\mathbf{y}(k)}{N}+\frac{\mathbf{1}_N^T\mathbf{y}(k)}{N}-\frac{\mathbf{1}_N^T \mathbf{x}(k)}{N}|)\\
\leq & \frac{1}{N}\sum_{j=1,j\neq i}^N\mathbb{E}(|y_i(k)-y_j(k)|)\\
&+\frac{1}{N} \mathbb{E}(|\mathbf{1}_N^T\mathbf{y}(k)-\mathbf{1}_N^T\mathbf{x}(k)|).
\end{aligned}
\end{equation}
By \eqref{cotn}, we get that
\begin{equation}\label{ex22}
\begin{aligned}
&\mathbf{y}(k+1)=\mathcal{A}(\mathbf{y}(k)+\mathbf{w}(k))+\mathbf{x}(k+1)-\mathbf{x}(k)\\
=&\mathcal{A}^2 \mathbf{y}(k-1)+\mathcal{A}^2\mathbf{w}(k-1)+\mathcal{A} \mathbf{w}(k)\\
&+\mathcal{A}(\mathbf{x}(k)-\mathbf{x}(k-1))+\mathbf{x}(k+1)-\mathbf{x}(k).
\end{aligned}
\end{equation}
By repeating the above process, it can be obtained that
\begin{equation}\label{ex2}
\begin{aligned}
\mathbf{y}(k+1)=&\mathcal{A}^{k+1}\mathbf{y}(0)+\sum_{j=0}^k \mathcal{A}^{k+1-j}\mathbf{w}(j)\\
&+\sum_{j=1}^{k+1}\mathcal{A}^{k+1-j}(\mathbf{x}(j)-\mathbf{x}(j-1)).
\end{aligned}
\end{equation}
Hence, by \eqref{ex2},
\begin{equation}
\begin{aligned}
&|y_i(k+1)-y_j(k+1)|\\
\leq &\sum_{n=1}^N|[\mathcal{A}^{k+1}]_{in}-[\mathcal{A}^{k+1}]_{jn}||y_n(0)|\\
&+\sum_{n=1}^N\sum_{l=0}^k|[\mathcal{A}^{k+1-l}]_{in}-[\mathcal{A}^{k+1-l}]_{jn} | |w_n(l)|\\
&+\sum_{n=1}^N\sum_{l=1}^{k+1}|[\mathcal{A}^{k+1-l}]_{in}-[\mathcal{A}^{k+1-l}]_{jn}||x_n(l)-x_n(l-1)|.
\end{aligned}
\end{equation}
Note that
\begin{equation}
\begin{aligned}
&\sum_{n=1}^N\left|[\mathcal{A}^k]_{in}-\frac{1}{N}\right|\\
=&\sum_{n=1}^N\left|\left[\mathcal{A}^k-\frac{\mathbf{1}_N\mathbf{1}_N^T}{N}\right]_{in}\right|\leq \left|\left|\mathcal{A}^k-\frac{\mathbf{1}_N\mathbf{1}_N^T}{N}\right|\right|_{\infty}\\
\leq& \sqrt{N}\left|\left|\mathcal{A}^k-\frac{\mathbf{1}_N\mathbf{1}_N^T}{N}\right|\right|
\leq \sqrt{N}\gamma^k,
\end{aligned}
\end{equation}
where the last inequality is obtained by Lemma \ref{lemam1}.
%

Hence,
\begin{equation}\label{exptt}
\begin{aligned}
&|y_i(k+1)-y_j(k+1)|\\
\leq & 2\sqrt{N}C_1 \gamma^{k+1}+2\sqrt{N}C \sum_{l=1}^{k+1}\gamma^{k+1-l}\alpha_{l-1}\\
&+\sum_{n=1}^N\sum_{l=0}^k|[\mathcal{A}^{k+1-l}]_{in}-[\mathcal{A}^{k+1-l}]_{jn} | |w_n(l)|,
\end{aligned}
\end{equation}
where $C_1=\max_{n\in\mathcal{V}}|y_n(0)|$ as $|y_n(0)|$ for $n\in\mathcal{V}$ are bounded.

Taking expectations on both sides of \eqref{exptt} gives
\begin{equation}\label{exptt1}
\begin{aligned}
&\mathbb{E}(|y_i(k+1)-y_j(k+1)|)\\
\leq & 2\sqrt{N}C_1 \gamma^{k+1}+2\sqrt{N}C \sum_{l=1}^{k+1}\gamma^{k+1-l}\alpha_{l-1}\\
&+\sum_{n=1}^N\sum_{l=0}^k|[\mathcal{A}^{k+1-l}]_{in}-[\mathcal{A}^{k+1-l}]_{jn} | \mathbb{E}(|w_n(l)|).
\end{aligned}
\end{equation}

Recalling that $\mathbb{E}(|w_n(l)|)=d\bar{q}^l,$ for $n\in\mathcal{V},$
\begin{equation}
\begin{aligned}
&\sum_{n=1}^N\sum_{l=0}^k|[\mathcal{A}^{k+1-l}]_{in}-[\mathcal{A}^{k+1-l}]_{jn} | \mathbb{E}(|w_n(l)|)\\
=&\frac{2\sqrt{N} d\gamma}{\gamma-\bar{q}}(\gamma^{k+1}-\bar{q}^{k+1}).
\end{aligned}
\end{equation}
Similarly,
\begin{equation}
2\sqrt{N} C \sum_{l=1}^{k+1}\gamma^{k+1-l}\alpha_{l-1}
=\frac{2 \sqrt{N} C c (\gamma^{k+1}-q^{k+1})}{\gamma-q}.
\end{equation}


Hence,
\begin{equation}
\begin{aligned}
&\mathbb{E}(|y_i(k)-\bar{x}(k)|)\\
\leq &\frac{2\sqrt{N}(N-1)C_1}{N}\gamma^{k}+\frac{2(N-1)\sqrt{N} d\gamma}{(\gamma-\bar{q})N}(\gamma^{k}-\bar{q}^{k})\\
&+\frac{2 (N-1)\sqrt{N} C c (\gamma^{k}-q^{k})}{(\gamma-q)N}+\frac{d (1-\bar{q}^{k})}{1-\bar{q}},
\end{aligned}
\end{equation}
by further utilizing the results in Lemma \ref{lemm}.
\end{proof}

%

\subsection{Convergence analysis}
In this section, we establish the convergence results for the proposed method.
\begin{Theorem}\label{th1}
Suppose that Assumptions  \ref{a2}-\ref{comm} are satisfied.  Then,
\begin{equation}\label{hou}
\begin{aligned}
&\lim_{k\rightarrow \infty} \mathbb{E}(|| \mathbf{x}(k)-\mathbf{x}^*||^2)\\
\leq & C_2^2e^{-\frac{mc}{1-q}}+\frac{c^2NC^2}{1-q^2}+\Phi_1+\Phi_2,
\end{aligned}
\end{equation}
where
\begin{equation}\label{phi1}
\begin{aligned}
\Phi_1=&4\max_{i\in\mathcal{V}}\{l_i\}(N-1)\left(C_2+\frac{cC\sqrt{N}}{1-q}\right)\times\\
&\left(\frac{C_1c}{1-q\gamma}+\frac{Cc^2q}{(1-q\gamma)(1-q^2)}\right),
\end{aligned}
\end{equation}
 and
 \begin{equation}
 \begin{aligned}
 \Phi_2=&2\max_{i\in\mathcal{V}}\{l_i\} \left(C_2+\frac{cC\sqrt{N}}{1-q}\right)dcq \times\\
 &\left(\frac{2(N-1)\gamma}{(1-q\gamma)(1-\bar{q}q)}+\frac{\sqrt{N}}{(1-q)(1-\bar{q}q)}\right),
 \end{aligned}
 \end{equation}
where $C_2=||\mathbf{x}(0)-\mathbf{x}^*||.$
\end{Theorem}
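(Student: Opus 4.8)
The plan is to track the Lyapunov-type quantity $a_k:=\mathbb{E}(\|\mathbf{x}(k)-\mathbf{x}^*\|^2)$ and to derive a single scalar recursion from the action update in \eqref{cotn}. Writing $\hat{g}(k):=[g_i(x_i(k),y_i(k))]_{vec}$ for the estimated pseudo-gradient and $e(k):=\hat{g}(k)-g(\mathbf{x}(k))$ for its error relative to the true pseudo-gradient $g(\mathbf{x}(k))$, I would first expand
\begin{equation}\nonumber
\|\mathbf{x}(k+1)-\mathbf{x}^*\|^2=\|\mathbf{x}(k)-\mathbf{x}^*\|^2-2\alpha_k(\mathbf{x}(k)-\mathbf{x}^*)^T\hat{g}(k)+\alpha_k^2\|\hat{g}(k)\|^2,
\end{equation}
and split the inner-product term as $(\mathbf{x}(k)-\mathbf{x}^*)^T g(\mathbf{x}(k))+(\mathbf{x}(k)-\mathbf{x}^*)^T e(k)$, so that the three resulting pieces can be bounded independently.

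Next I would bound each piece. For the monotone part, Assumption \ref{a3} together with $g(\mathbf{x}^*)=\mathbf{0}_N$ gives $(\mathbf{x}(k)-\mathbf{x}^*)^T g(\mathbf{x}(k))\geq m\|\mathbf{x}(k)-\mathbf{x}^*\|^2$, which produces the contraction factor $1-2m\alpha_k$. For the quadratic part, Assumption \ref{ass} yields $\|\hat{g}(k)\|^2\leq NC^2$, contributing $\alpha_k^2 NC^2=c^2q^{2k}NC^2$. For the cross term I would use Cauchy--Schwarz together with the \emph{a priori} deterministic bound $\|\mathbf{x}(k)-\mathbf{x}^*\|\leq C_2+\frac{cC\sqrt{N}}{1-q}=:B$, obtained by telescoping \eqref{al1}, bounding each increment surely by $\sqrt{N}C$ and summing the geometric stepsizes $\alpha_l=cq^l$; pulling the deterministic $B$ out leaves only $\mathbb{E}\|e(k)\|$ to control. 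The error $e(k)$ is then handled through Assumption \ref{a2}: since $g_i(x_i,y_i)-\nabla_i f_i(\mathbf{x})$ is exactly the deviation caused by replacing the true aggregate $\bar{x}$ by the local estimate $y_i$, Lipschitzness bounds $\|e(k)\|$ by $\max_{i\in\mathcal{V}}\{l_i\}$ times the consensus error, whose expectation is precisely the content of Theorem \ref{th3}. This is the step where the factors $\sqrt{N}$ and $(N-1)$ enter.

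Assembling these estimates and taking expectations yields
\begin{equation}\nonumber
a_{k+1}\leq(1-2m\alpha_k)a_k+2\alpha_k B\,\mathbb{E}\|e(k)\|+\alpha_k^2NC^2.
\end{equation}
I would unroll this, bounding the homogeneous product via $1-u\leq e^{-u}$ so that $\prod_{l=0}^{k-1}(1-2m\alpha_l)\leq\exp(-2m\sum_{l=0}^{k-1}\alpha_l)\to e^{-2mc/(1-q)}\leq e^{-mc/(1-q)}$ (the stated constant is the looser of the two), giving the leading term $C_2^2 e^{-mc/(1-q)}$ from $a_0=C_2^2$. For the inhomogeneous part I would bound every remaining contraction factor by $1$; the quadratic terms sum to $NC^2c^2\sum_{k}q^{2k}=\frac{c^2NC^2}{1-q^2}$, while substituting the Theorem \ref{th3} bound into $\sum_k 2\alpha_k B\,\mathbb{E}\|e(k)\|$ and summing the several geometric series of ratios $q\gamma$, $q\bar{q}$ and $q^2$ produces $\Phi_1$ from the initial-disagreement and gradient-drift contributions and $\Phi_2$ from the two noise-proportional contributions. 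Letting $k\to\infty$ then delivers \eqref{hou}.

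I expect the main obstacle to be twofold. First, securing the uniform deterministic bound $B$ so that the cross term remains \emph{linear} in the consensus error, and hence linear in the noise scale $d$, rather than quadratic; this linearity is exactly what keeps $\Phi_1,\Phi_2$ in the stated closed form. Second, the constant bookkeeping when summing the distinct geometric series and collapsing partial-fraction differences such as $\frac{1}{1-q\gamma}-\frac{1}{1-q^2}$ into the compact expressions in \eqref{phi1}. The convergence-to-a-neighborhood structure itself is transparent once the recursion is in place; the real effort lies in tracking the constants.
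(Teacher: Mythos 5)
Your proposal is correct and follows essentially the same route as the paper's proof: the same expansion of $\|\mathbf{x}(k+1)-\mathbf{x}^*\|^2$, the same split of the cross term into a strongly monotone part and a consensus-error part bounded via the Lipschitz constants and Theorem \ref{th3}, the same a priori deterministic bound $C_2+\frac{cC\sqrt{N}}{1-q}$ to keep the cross term linear in the consensus error, and the same unrolling with $1-u\leq e^{-u}$ and geometric-series summation. Your remark that the exponent $e^{-mc/(1-q)}$ is the looser of the two available constants accurately reflects what the paper's own derivation yields.
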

\begin{proof}
By \eqref{cotn}, we can get that
\begin{equation}
\mathbf{x}(k+1)-\mathbf{x}^*=\mathbf{x}(k)-\mathbf{x}^*-\alpha_k [g_i(x_i(k),y_i(k))]_{vec}.
\end{equation}
Hence,
\begin{equation}
\begin{aligned}
&||\mathbf{x}(k+1)-\mathbf{x}^*||^2\\
=&||\mathbf{x}(k)-\mathbf{x}^*-\alpha_k [g_i(x_i(k),y_i(k))]_{vec}||^2\\
=&||\mathbf{x}(k)-\mathbf{x}^*||^2-2\alpha_k(\mathbf{x}(k)-\mathbf{x}^*)^T [g_i(x_i(k),y_i(k))]_{vec}\\
&+ \alpha_k^2 ||[g_i(x_i(k),y_i(k))]_{vec}||^2.
\end{aligned}
\end{equation}
Noticing that $g_i(x_i(k),y_i(k))$ is uniformly bounded by $C$, we can get that $||[g_i(x_i(k),y_i(k))]_{vec}||^2\leq NC^2.$ Hence
\begin{equation}
\alpha_k^2 ||[g_i(x_i(k),y_i(k))]_{vec}||^2\leq \alpha_k^2NC^2.
\end{equation}
 Moreover,
 \begin{equation}
 \begin{aligned}
 &-2\alpha_k(\mathbf{x}(k)-\mathbf{x}^*)^T [g_i(x_i(k),y_i(k))]_{vec}\\
 =& -2\alpha_k(\mathbf{x}(k)-\mathbf{x}^*)^T g(\mathbf{x}(k))\\
 &+2\alpha_k(\mathbf{x}(k)-\mathbf{x}^*)^T (g(\mathbf{x}(k))-[g_i(x_i(k),y_i(k))]_{vec})\\
 \leq & -2\alpha_km ||\mathbf{x}(k)-\mathbf{x}^*||^2\\
 &+2\alpha_k(\mathbf{x}(k)-\mathbf{x}^*)^T (g(\mathbf{x}(k))-[g_i(x_i(k),y_i(k))]_{vec}).
 \end{aligned}
 \end{equation}
Noticing that $g_i(\mathbf{x}(k))$ is globally Lipschitz with constant $l_i,$ we get that
\begin{equation}
\begin{aligned}
&||g(\mathbf{x}(k))-[g_i(x_i(k),y_i(k))]_{vec}||\\
\leq& \max_{i\in\mathcal{V}}\{l_i\}||\mathbf{y}(k)-\mathbf{1}_N\bar{x}(k)||.
\end{aligned}
\end{equation}

Therefore,
\begin{equation}
 \begin{aligned}
 &-2\alpha_k(\mathbf{x}(k)-\mathbf{x}^*)^T [g_i(x_i(k),y_i(k))]_{vec}\\
\leq & -2\alpha_km ||\mathbf{x}(k)-\mathbf{x}^*||^2\\
&+2\alpha_k\max_{i\in\mathcal{V}}\{l_i\}||\mathbf{x}(k)-\mathbf{x}^*||||\mathbf{y}(k)-\mathbf{1}_N\bar{x}(k)||,
\end{aligned}
\end{equation}
and
\begin{equation}
\begin{aligned}
&||\mathbf{x}(k+1)-\mathbf{x}^*||^2\\
\leq &||\mathbf{x}(k)-\mathbf{x}^*||^2-2\alpha_km ||\mathbf{x}(k)-\mathbf{x}^*||^2+\alpha_k^2NC^2\\
&+2\alpha_k\max_{i\in\mathcal{V}}\{l_i\}||\mathbf{x}(k)-\mathbf{x}^*||||\mathbf{y}(k)-\mathbf{1}_N\bar{x}(k)||.
\end{aligned}
\end{equation}

Moreover, as $\mathbf{x}(k+1)=\mathbf{x}(k)-\alpha_k[g_i(x_i(k),y_i(k))]_{vec}$,
\begin{equation}
\begin{aligned}
||\mathbf{x}(k+1)-\mathbf{x}^*||\leq &||\mathbf{x}(k)-\mathbf{x}^*|| +\alpha_k \sqrt{N} C\\
\leq & ||\mathbf{x}(0)-\mathbf{x}^*||+\frac{cC\sqrt{N}}{1-q}.
\end{aligned}
\end{equation}
Let $C_2=||\mathbf{x}(0)-\mathbf{x}^*||$, we have
\begin{equation}\label{expe}
\begin{aligned}
&||\mathbf{x}(k+1)-\mathbf{x}^*||^2\\
\leq &||\mathbf{x}(k)-\mathbf{x}^*||^2-2\alpha_km ||\mathbf{x}(k)-\mathbf{x}^*||^2+\alpha_k^2NC^2\\
&+2\alpha_k\max_{i\in\mathcal{V}}\{l_i\}\left(C_2+\frac{cC\sqrt{N}}{1-q}\right)||\mathbf{y}(k)-\mathbf{1}_N\bar{x}(k)||.
\end{aligned}
\end{equation}

Taking expectations on both sides of \eqref{expe} gives
\begin{equation}\label{exp}
\begin{aligned}
&\mathbb{E}(||\mathbf{x}(k+1)-\mathbf{x}^*||^2)\\
\leq & (1-2\alpha_km)\mathbb{E}(||\mathbf{x}(k)-\mathbf{x}^*||^2)+\alpha_k^2NC^2\\
&+2\alpha_k\max_{i\in\mathcal{V}}\{l_i\}\left(C_2+\frac{cC\sqrt{N}}{1-q}\right)\times\\
&\mathbb{E}(||\mathbf{y}(k)-\mathbf{1}_N\bar{x}(k)||).
\end{aligned}
\end{equation}
By Theorem \ref{th3},
\begin{equation}
\begin{aligned}
&2\alpha_k\max_{i\in\mathcal{V}}\{l_i\}\left(C_2+\frac{cC\sqrt{N}}{1-q}\right)\mathbb{E}(||\mathbf{y}(k)-\mathbf{1}_N\bar{x}(k)||)\\
\leq &2\alpha_k\max_{i\in\mathcal{V}}\{l_i\}\left(C_2+\frac{cC\sqrt{N}}{1-q}\right)(2(N-1)\gamma^{k}C_1\\
&+\frac{2(N-1) d\gamma}{\gamma-\bar{q}}(\gamma^{k}-\bar{q}^{k})+\frac{2 (N-1) C c (\gamma^{k}-q^{k})}{\gamma-q}\\
&+\frac{\sqrt{N}d (1-\bar{q}^{k})}{1-\bar{q}}).
\end{aligned}
\end{equation}

Therefore,
\begin{equation}\label{expp}
\begin{aligned}
&\mathbb{E}(||\mathbf{x}(k+1)-\mathbf{x}^*||^2)\\
\leq & \prod_{j=0}^k(1-2\alpha_jm)\mathbb{E}(||\mathbf{x}(0)-\mathbf{x}^*||^2)+\sum_{j=0}^k\alpha_j^2NC^2\\
&+\sum_{j=0}^k 2\alpha_j\max_{i\in\mathcal{V}}\{l_i\}\left(C_2+\frac{cC\sqrt{N}}{1-q}\right)\left(2(N-1)\gamma^{j}C_1\right.\\
&+\frac{2(N-1) d\gamma}{\gamma-\bar{q}}(\gamma^{j}-\bar{q}^{j})+\frac{2 (N-1) C c (\gamma^{j}-q^{j})}{\gamma-q}\\
&\left.+\frac{\sqrt{N}d (1-\bar{q}^{j})}{1-\bar{q}}\right).
\end{aligned}
\end{equation}

Let $k\rightarrow \infty$, then
\begin{equation}\label{expp1}
\begin{aligned}
&\lim_{k\rightarrow \infty}\mathbb{E}(||\mathbf{x}(k)-\mathbf{x}^*||^2)\\
\leq & \prod_{j=0}^{\infty}(1-2\alpha_jm)\mathbb{E}(||\mathbf{x}(0)-\mathbf{x}^*||^2)+\sum_{j=0}^{\infty}\alpha_j^2NC^2\\
&+\sum_{j=0}^{\infty} 2\alpha_j\max_{i\in\mathcal{V}}\{l_i\}\left(C_2+\frac{cC\sqrt{N}}{1-q}\right)\left(2(N-1)\gamma^{j}C_1\right.\\
&+\frac{2(N-1) d\gamma}{\gamma-\bar{q}}(\gamma^{j}-\bar{q}^{j})+\frac{2 (N-1) C c (\gamma^{j}-q^{j})}{\gamma-q}\\
&\left.+\frac{\sqrt{N}d (1-\bar{q}^{j})}{1-\bar{q}}\right),
\end{aligned}
\end{equation}
in which
\begin{equation}
\begin{aligned}
&\sum_{j=0}^{\infty}2\alpha_j\max_{i\in\mathcal{V}}\{l_i\}\left(C_2+\frac{cC\sqrt{N}}{1-q}\right)\left(2(N-1)\gamma^{j}C_1\right)\\
= & 4(N-1) C_1 \max_{i\in\mathcal{V}}\{l_i\} \left(C_2+\frac{cC\sqrt{N}}{1-q}\right) \frac{c}{1-q \gamma},
\end{aligned}
\end{equation}
and
\begin{equation}
\begin{aligned}
&\sum_{j=0}^{\infty} 2\alpha_j\max_{i\in\mathcal{V}}\{l_i\}(C_2+\frac{cC\sqrt{N}}{1-q})\frac{2(N-1) d\gamma}{\gamma-\bar{q}}(\gamma^{j}-\bar{q}^{j})\\
= & 4(N-1)\max_{i\in\mathcal{V}}\{l_i\} d\gamma \left(C_2+\frac{cC\sqrt{N}}{1-q}\right)\frac{cq}{(1-q\gamma)(1-\bar{q}q)}.
\end{aligned}
\end{equation}

In addition,
\begin{equation}
\begin{aligned}
&\sum_{j=0}^{\infty}2\alpha_j\max_{i\in\mathcal{V}}\{l_i\}\left(C_2+\frac{cC\sqrt{N}}{1-q}\right)\frac{2 (N-1) C c (\gamma^{j}-q^{j})}{\gamma-q}\\
&=4\max_{i\in\mathcal{V}}\{l_i\} \left(C_2+\frac{cC\sqrt{N}}{1-q}\right)\left(\frac{(N-1)Cc^2q}{(1-q\gamma)(1-q^2)}\right),
\end{aligned}
\end{equation}
and
\begin{equation}
\begin{aligned}
&\sum_{j=0}^\infty 2\alpha_j\max_{i\in\mathcal{V}}\{l_i\}\left(C_2+\frac{cC\sqrt{N}}{1-q}\right)\frac{\sqrt{N}d (1-\bar{q}^{j})}{1-\bar{q}}\\
=& 2 \max_{i\in\mathcal{V}}\{l_i\}\left(C_2+\frac{cC\sqrt{N}}{1-q}\right) \frac{\sqrt{N}dcq}{(1-q)(1-q\bar{q})}.
\end{aligned}
\end{equation}
Rearranging these terms gives
\begin{equation}
\begin{aligned}
&\lim_{k\rightarrow \infty} \mathbb{E}(|| \mathbf{x}(k)-\mathbf{x}^*||^2)\\
\leq & \prod_{j=0}^{\infty} (1-\alpha_jm) \mathbb{E}(||\mathbf{x}(0)-\mathbf{x}^*||^2)\\
&+\frac{c^2NC^2}{1-q^2}+\Phi_1+\Phi_2\\
\leq & C_2^2e^{-\frac{mc}{1-q}}+\frac{c^2NC^2}{1-q^2}+\Phi_1+\Phi_2,
\end{aligned}
\end{equation}
in which the last inequality is derived by utilizing $1-\alpha_jm\leq e^{-\alpha_jm}$ and $\mathbb{E}(||\mathbf{x}(0)-\mathbf{x}^*||^2)\leq C_2^2.$ Moreover,
\begin{equation}
\begin{aligned}
\Phi_1=&4\max_{i\in\mathcal{V}}\{l_i\}(N-1)\left(C_2+\frac{cC\sqrt{N}}{1-q}\right)\times\\
&\left(\frac{C_1c}{1-q\gamma}+\frac{Cc^2q}{(1-q\gamma)(1-q^2)}\right),
\end{aligned}
\end{equation}
 and
 \begin{equation}
 \begin{aligned}
 \Phi_2=&2\max_{i\in\mathcal{V}}\{l_i\} \left(C_2+\frac{cC\sqrt{N}}{1-q}\right)dcq \times\\
 &\left(\frac{2(N-1)\gamma}{(1-q\gamma)(1-\bar{q}q)}+\frac{\sqrt{N}}{(1-q)(1-\bar{q}q)}\right).
 \end{aligned}
 \end{equation}
\end{proof}
\begin{Remark}
From the above analysis, it is clear that $\lim_{k\rightarrow \infty} \mathbb{E}(|| \mathbf{x}(k)-\mathbf{x}^*||^2)$ is bounded by $C_2^2e^{-\frac{mc}{1-q}}+\frac{c^2NC^2}{1-q^2}+\Phi_1+\Phi_2,$ where $C_2^2e^{-\frac{mc}{1-q}}$ is resulted from the initial error. Moreover, $\frac{c^2NC^2}{1-q^2}$ and $\Phi_1$ depend on stepsize selection. In addition, $\Phi_2$ is resulted from the added noises $w_i(k),i\in\mathcal{V}$.
\end{Remark}

\subsection{Differential privacy}
In this section, we show that the proposed method is $\epsilon$-differentially private. To analyze the privacy level for the proposed Nash equilibrium seeking strategy, we consider two adjacent function sets  $F^{(1)}=\{f_i^{(1)}\}_{i=1}^N$, $F^{(2)}=\{f_i^{(2)}\}_{i=1}^N$ where $f_i^{(1)}=f_i^{(2)},\forall i\neq i_0,$ and $f_i^{(1)}\neq f_i^{(2)}$ for some $i_0\in\mathcal{V}.$
Note that if $\nabla_i f_i^{(1)}=\nabla_i f_i^{(2)}$ for $i=i_0,$ the two sequences generated by the proposed method under $F^{(1)}$ and $F^{(2)}$ would be the same by enforcing the added noises to be the same, indicating that the proposed method is of complete privacy. Hence, in the rest, we only consider the case in which $\nabla_i f_i^{(1)}\neq \nabla_i f_i^{(2)}$ for $i=i_0.$ In addition, the worst case scenario is considered, i.e., the adversary knows $\mathcal{A},\mathbf{x}(0),\{f_i\}_{i\neq i_0},\alpha_k$ and the distributions of the random variables. Moreover, the observations are  $\mathcal{O}=\{\mathcal{O}_k\}_{k=0}^{\infty},$ where $\mathcal{O}_k=\{\mathbf{p}(k)\}.$


Then, the following theorem can be derived.

\begin{Theorem}\label{the2}
Suppose that Assumption \ref{ass} is satisfied. Then, the proposed method in \eqref{al1}-\eqref{al2} is $\epsilon$-differentially private, where
\begin{equation}\label{priv}
\epsilon=\frac{2 cC\bar{q}}{d(\bar{q}-q)}.
\end{equation}
\end{Theorem}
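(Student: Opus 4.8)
The plan is to turn the privacy inequality into a likelihood-ratio estimate and then exploit the decentralized structure of \eqref{al1}--\eqref{al2}. First I would fix an arbitrary observation sequence $\mathcal{O}=\{\mathbf{p}(k)\}_{k=0}^{\infty}$. Under each function set $F^{(j)}$, the states produced by \eqref{al1}--\eqref{al2}, with the communicated messages set to the observed values $p_l(k)$, are a deterministic function of $\mathbf{p}(0),\dots,\mathbf{p}(k-1)$ and of the known initialization $y_i(0)=x_i(0)$; hence the only randomness consistent with $\mathcal{O}$ is the uniquely recovered noise $w_i^{(j)}(k)=p_i(k)-y_i^{(j)}(k)$. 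Since the map from the noise to the observations is triangular with unit Jacobian (each $y_i(k)$ in \eqref{cotn} depends only on earlier messages), the likelihood factorizes as
\begin{equation}\nonumber
\mathbb{P}\{\mathcal{O}\,|\,F^{(j)}\}=\prod_{k=0}^{\infty}\prod_{i=1}^{N}\mathcal{L}(p_i(k)-y_i^{(j)}(k),\theta_k),
\end{equation}
so that, assuming equal priors on the two adjacent sets, the definition of $\epsilon$-differential privacy reduces to an upper bound on $\mathbb{P}\{\mathcal{O}|F^{(1)}\}/\mathbb{P}\{\mathcal{O}|F^{(2)}\}$.

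The key step, and the one I expect to be the crux, is to show that for a fixed $\mathcal{O}$ the noise sequences agree on every coordinate except $i_0$. For $i\neq i_0$ the map $g_i$ is identical under $F^{(1)}$ and $F^{(2)}$, and the local recursion \eqref{al1}--\eqref{al2} for $(x_i,y_i)$ depends only on $g_i$, on the observed neighbor messages $\{p_l(k)\}_l$, and on the shared initialization; an induction on $k$ then shows $y_i^{(1)}(k)=y_i^{(2)}(k)$, so $w_i^{(1)}(k)=w_i^{(2)}(k)$ and all factors with $i\neq i_0$ cancel in the ratio. Writing $\Delta y_{i_0}(k):=y_{i_0}^{(1)}(k)-y_{i_0}^{(2)}(k)$ and using $\mathcal{L}(w,b)=\tfrac{1}{2b}e^{-|w|/b}$ together with $\left||a|-|b|\right|\le|a-b|$, I obtain
\begin{equation}\nonumber
\ln\frac{\mathbb{P}\{\mathcal{O}|F^{(1)}\}}{\mathbb{P}\{\mathcal{O}|F^{(2)}\}}\le\sum_{k=0}^{\infty}\frac{|w_{i_0}^{(1)}(k)-w_{i_0}^{(2)}(k)|}{\theta_k}=\sum_{k=0}^{\infty}\frac{|\Delta y_{i_0}(k)|}{\theta_k}.
\end{equation}

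It then remains to control the sensitivity $|\Delta y_{i_0}(k)|$. Subtracting the two copies of \eqref{al2} for player $i_0$, the consensus term $\sum_l a_{i_0 l}p_l(k)$ cancels because it is observed, leaving $\Delta y_{i_0}(k+1)=-\alpha_k(g_{i_0}^{(1)}(\cdot)-g_{i_0}^{(2)}(\cdot))$ with $\Delta y_{i_0}(0)=0$. By Assumption \ref{ass} each gradient obeys $|g_{i_0}^{(j)}(\cdot)|\le C$, so $|\Delta y_{i_0}(k)|\le 2C\alpha_{k-1}=2Ccq^{k-1}$ for $k\ge1$. Substituting $\theta_k=d\bar q^{k}$ and summing the resulting geometric series, which converges precisely because $\bar q>q$, gives the asserted value $\epsilon=\dfrac{2cC\bar q}{d(\bar q-q)}$.

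The main obstacle is the cancellation argument of the second paragraph: it must be justified that each agent's update is self-contained once the transmitted messages are frozen to their observed values, so that adjacency in the single function $f_{i_0}$ perturbs only the $i_0$-th noise stream. Two supporting points should be verified with care, namely that the change of variables from the noise to the observations is measure preserving (the unit-Jacobian claim, from the triangular dependence in \eqref{cotn}) and that the infinite sum converges, which is guaranteed by the gap $\bar q>q$ between the stepsize and noise decay rates. Notably, only Assumption \ref{ass} (uniform boundedness of the gradients) enters the estimate, which is why it is the sole hypothesis of the theorem.
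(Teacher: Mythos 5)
Your argument follows the paper's proof essentially step for step: condition on the observation sequence, note that the noise realization consistent with it is then uniquely determined (the paper phrases this as a bijection between noise sequences and observations), show by induction that the noise streams agree for every $i\neq i_0$ so that those Laplace factors cancel in the likelihood ratio, bound the remaining log-ratio by $\sum_{k}|\Delta y_{i_0}(k)|/\theta_k$, and control $|\Delta y_{i_0}(k)|$ through the recursion $\Delta y_{i_0}(k+1)=-\alpha_k\Delta g_{i_0}(k)$ together with the uniform bound $C$ on the gradients. All of that is sound and matches the paper's route.

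The one place where your write-up does not close is the final summation. From $\Delta y_{i_0}(0)=0$ and $|\Delta y_{i_0}(k)|\le 2C\alpha_{k-1}=2Ccq^{k-1}$ for $k\ge 1$ --- which is the correct consequence of the recursion, and is what you state --- the series is
\[
\sum_{k=1}^{\infty}\frac{2Ccq^{k-1}}{d\bar q^{k}}=\frac{2Cc}{d\bar q}\cdot\frac{1}{1-q/\bar q}=\frac{2Cc}{d(\bar q-q)},
\]
which is \emph{not} the asserted $\frac{2Cc\bar q}{d(\bar q-q)}$; it is larger by the factor $1/\bar q>1$. The paper reaches the stated constant by instead writing $|\Delta y_{i_0}(k)|\le 2C\alpha_k$ and summing from $k=0$, i.e., by an index shift that the recursion does not actually justify (at $k=1$ the recursion yields only $|\Delta y_{i_0}(1)|\le 2C\alpha_0=2Cc$, whereas $2C\alpha_1=2Ccq$). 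Your bookkeeping is the defensible one, but then the privacy parameter you actually prove is $\epsilon=\frac{2Cc}{d(\bar q-q)}$; asserting that your geometric series ``gives the asserted value'' is an arithmetic slip, and you should either report this slightly larger constant or justify recovering the extra factor of $\bar q$.
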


\begin{proof}
For any fixed initial state $\mathbf{x}(0)$, we see from \eqref{al1}-\eqref{al2} that
\begin{equation}
x_i(1)=x_i(0)-\alpha_0g_i(x_i(0),y_i(0)),
\end{equation}
and hence $x_i(1),$ and $x_i(1)-x_i(0)$ are fixed. Moreover,
\begin{equation}
y_i(1)=\sum_{j=1}^{N}a_{ij}p_j(0)+x_i(1)-x_i(0),
\end{equation}
and hence if $p_j(0)$ for $j\in\mathcal{V}$ are fixed, we get that $w_l(0)$ and $y_i(1)$ are fixed. Repeating the above analysis, we get that for any set of objective functions, if we fix the observation sequence, then, there exists a unique sequence $\mathbf{x}(k),\mathbf{y}(k),\mathbf{w}(k)$ for $k\in\{0,1,2,\cdots\}$ that can generate the sequence of observation. Hence, under the set of objective functions $F^{(l)},l\in\{1,2\}$, there exists a bijective mapping from the noise sequence to the set of observations. For notational convenience, denote the mapping as $\Omega^{(l)}(\mathbf{w}),l\in\{1,2\},$ respectively.

Moreover, for presentation clarity, we denote the sequence generated by the proposed method under $F^{(1)}$ as
\begin{equation}\label{a5l2}
\begin{aligned}
x_i^{(1)}(k+1)=&x_i^{(1)}(k)-\alpha_kg_i^{(1)}(x_i^{(1)}(k),y_i^{(1)}(k)),\\
y_i^{(1)}(k+1)=&\sum_{j=1}^{N}a_{ij}p_j^{(1)}(k)+x_i^{(1)}(k+1)-x_i^{(1)}(k).
\end{aligned}
\end{equation}
Correspondingly, the sequence generated by the proposed method under $F^{(2)}$ is given by

\begin{equation}\label{a5l3}
\begin{aligned}
x_i^{(2)}(k+1)&=x_i^{(2)}(k)-\alpha_kg_i^{(2)}(x_i^{(2)}(k),y_i^{(2)}(k)),\\
y_i^{(2)}(k+1)&=\sum_{j=1}^{N}a_{ij}p_j^{(2)}(k)+x_i^{(2)}(k+1)-x_i^{(2)}(k).
\end{aligned}
\end{equation}
As $x_i^{(1)}(0)=x_i^{(2)}(0),y_i^{(1)}(0)=y_i^{(2)}(0),$ it can be easily obtained that for $i\neq i_0,$
\begin{equation}
x_i^{(1)}(k)=x_i^{(2)}(k),y_i^{(1)}(k)=y_i^{(k)}(k),
\end{equation}
 for all nonnegative integer $k,$ given that the two function sets generate the same observations.
Therefore, to ensure that $y_i^{(1)}(k)+w_i^{(1)}(k)=y_i^{(2)}(k)+w_i^{(2)}(k),$ we only need to enforce that,
\begin{equation}\label{dp1}
w_i^{(1)}(k)=w_i^{(2)}(k),
\end{equation}
for $i\neq i_0.$

Moreover, for $i=i_0$, we enforce that
\begin{equation}\label{dp2}
\Delta w_i(k)=-\Delta y_i(k),
\end{equation}
in which $\Delta w_i(k)=w_i^{(1)}(k)-w_i^{(2)}(k),\Delta y_i(k)=y_i^{(1)}(k)-y_i^{(2)}(k)$ and
\begin{equation}\label{dp3}
\begin{aligned}
\Delta y_i(k+1)&=\Delta x_i(k+1)-\Delta x_i(k)\\
&=- \alpha_k \Delta g_i(k),
\end{aligned}
\end{equation}
where $\Delta g_i(k)=g_i^{(1)}(x^{(1)}_i(k),y_i^{(1)}(k))-g_i^{(2)}(x_i^{(2)}(k),y_i^{(2)}(k)),
$
and $\Delta x_i(k)=x_i^{(1)}(k)-x_i^{(2)}(k).$

Let $\mathbf{w}^{(l)}(k)=[w_1^{(l)}(k),w_2^{(l)}(k),\cdots,w_N^{(l)}(k)]^T$ and $\mathbf{w}^{(l)}=\{\mathbf{w}^{(l)}(k)\}_{k=0}^{\infty}$  for $l\in\{1,2\}$. Moreover, let $\mathcal{O}^{(l)}$ be the sequence of observations under the function set $F^{(l)}.$ According to \eqref{dp1}-\eqref{dp3}, we know that for each $\mathbf{w}^{(1)}$, there exists a unique $\mathbf{w}^{(2)}$ such that $\mathcal{O}^{(1)}=\mathcal{O}^{(2)}.$ Let $\mathcal{B}(\cdot)$ be a mapping such that $\mathbf{w}^{(2)}=\mathcal{B}(\mathbf{w}^{(1)})$ if and only if $\mathcal{O}^{(1)}=\mathcal{O}^{(2)}.$ Then, it is clear that $\mathcal{B}(\cdot)$ is bijective.
Let $\Gamma^{(l)}=\{\mathbf{w}^{(l)}|\Omega^{(l)}(\mathbf{w})\in\mathcal{O}\},$ then, following the analysis in \cite{DingCDC18}, it can be obtained that
\begin{equation}
\frac{\mathbb{P}\{F^{(1)}|\mathcal{O}\}}{\mathbb{P}\{F^{(2)}|\mathcal{O}\}}=
\frac{\int_{\Gamma^{(1)}}\prod_{i=1}^N \prod_{k=0}^{\infty} \mathcal{L}(w_i^{(1)}(k),\theta_k)d\mathbf{w}^{(1)} }{\int_{\Gamma^{(2)}}\prod_{i=1}^N \prod_{k=0}^{\infty} \mathcal{L}(\mathcal{B}(w_i^{(1)}(k)),\theta_k)d\mathbf{w}^{(1)}},
\end{equation}
in which
\begin{equation}
\begin{aligned}
&\frac{\prod_{i=1}^N \prod_{k=0}^{\infty} \mathcal{L}(w_i^{(1)}(k),\theta_k)}{\prod_{i=1}^N \prod_{k=0}^{\infty} \mathcal{L}(\mathcal{B}(w_i^{(1)}(k)),\theta_k)}\\
\leq &\prod_{i=1}^N \prod_{k=0}^{\infty} e^{\frac{|p_i^{(1)}(k)-y_i^{(1)}(k)-(p_i^{(2)}(k)-y_i^{(2)}(k))|}{\theta_k}}\\
=&\prod_{k=0}^{\infty}e^{\frac{|\Delta w_{i_0}(k)|}{\theta(k)}}\leq e^{\sum_{k=0}^{\infty}\frac{|\Delta y_{i_0}(k)|}{\theta(k)}}.
\end{aligned}
\end{equation}
By the boundedness of the gradient value, we get that
$|\Delta y_{i_0}(k)|\leq 2 C \alpha_k.$

Therefore,
\begin{equation}
e^{\sum_{k=0}^{\infty}\frac{|\Delta y_{i_0}(k)|}{d\bar{q}^k}}\leq e^ {\sum_{k=0}^{\infty}\frac{{2 C  \alpha_k}}{d\bar{q}^k}}.
\end{equation}

By further noticing that $\bar{q}\in(q,1),$ we get that
\begin{equation}
e^{\sum_{k=0}^{\infty}\frac{|\Delta y_{i_0}(k)|}{d\bar{q}^k}}\leq e^{\frac{2Cc\bar{q}}{d(\bar{q}-q)}}.
\end{equation}
Note that $i_0\in \mathcal{V}$ stands for any player engaged in the game, we can conclude that the privacy level of the whole system is $\epsilon=\frac{2Cc\bar{q}}{d(\bar{q}-q)},$ i.e., the proposed method is $\epsilon$-differentially private.
\end{proof}

\begin{Remark}
In many existing works on distributed optimization and Nash equilibrium seeking, the decaying stepsize is required to satisfy
\begin{equation}
\sum_{k=0}^{\infty} \alpha_k=\infty, \sum_{k=0}^{\infty} \alpha_k^2<\infty,
\end{equation}
to establish the convergence results (see, e.g., \cite{KoshalOR16}\cite{Ram}). However, in this paper, we adopt a decaying stepsize that satisfies
\begin{equation}
\sum_{k=0}^{\infty} \alpha_k<\infty.
\end{equation}
Note that this is required to establish the $\epsilon$-differential privacy of the proposed method, i.e., differential privacy is not achievable with not summable stepsize in the proposed method. Therefore, the convergence accuracy is sacrificed to some extent for differential privacy. Note that this is in accordance with the ``impossibility result for $0$-LAS message perturbing algorithms" in \cite{Nozari} and some other existing results on differential privacy (see, e.g., \cite{Huang15}). The tradeoffs between the convergence property and differential privacy will be discussed in more details later.
\end{Remark}

\begin{Remark}
In reality, communication channels are often subject to various kinds of noises during information dissemination (see e.g., \cite{LiTAC2010}-\cite{LongIJRNC} and the references therein). Therefore, if we can do some experiments to investigate the characteristics and properties of communication noises in real communication channels, it would be an interesting open question to study whether it is possible to employ the noises in the communication channels to achieve differentially private Nash equilibrium seeking or not.
\end{Remark}

\subsection{Tradeoffs between the accuracy and privacy level}
From \eqref{hou}, it can be seen that
\begin{equation}
\lim_{k\rightarrow \infty} \mathbb{E}(|| \mathbf{x}(k)-\mathbf{x}^*||^2)\leq D,
\end{equation}
where $D=C_2^2e^{-\frac{mc}{1-q}}+\frac{c^2NC^2}{1-q^2}+\Phi_1+\Phi_2$ and $\Phi_2$ is resulted from the added noises.

By Theorem \ref{the2}, $\epsilon=\frac{2 cC\bar{q}}{d(\bar{q}-q)}$. Therefore,
\begin{equation}
d=\frac{2 cC\bar{q}}{\epsilon(\bar{q}-q)}.
\end{equation}
%

To see the tradeoff between the convergence accuracy and the privacy level, the accuracy bound $D$ can be restated in terms of $\epsilon$ as:
\begin{equation}\label{hou}
\begin{aligned}
D= & C_2^2e^{-\frac{mc}{1-q}}+\frac{c^2NC^2}{1-q^2}+\Phi_1+\tilde{\Phi}_2,
\end{aligned}
\end{equation}
where $\Phi_1$ is defined in  \eqref{phi1} and,
 \begin{equation}
 \begin{aligned}
 \tilde{\Phi}_2=&2\max_{i\in\mathcal{V}}\{l_i\} \left(C_2+\frac{cC\sqrt{N}}{1-q}\right) \frac{2C c^2q\bar{q}}{\epsilon(\bar{q}-q)}\times\\
 &\left(\frac{2(N-1)\gamma}{(1-q\gamma)(1-\bar{q}q)}+\frac{\sqrt{N}}{(1-q)(1-\bar{q}q)}\right).
 \end{aligned}
 \end{equation}

From \eqref{hou}, it can be seen that with $q,\bar{q},c$ being fixed, $D=O(\frac{1}{\epsilon}),$ indicating that the accuracy of the proposed method becomes arbitrarily bad when the method is of complete privacy. Hence, there is a tradeoff between the convergence accuracy and the privacy level. However, the presented result preserves the following properties:
\begin{enumerate}
  \item For any given $\epsilon>0,$ $c,d,\bar{q},q$ can be tuned such that the proposed method is $\epsilon$-differentially private;
  \item If privacy is not concerned, then, for any given positive constant $D$, $c,\bar{q},q$ can be tuned such that $\lim_{k\rightarrow\infty} \mathbb{E}(||\mathbf{x}-\mathbf{x}^*||^2)\leq D.$
\end{enumerate}
The first property can be easily obtained by \eqref{priv}. The second property is derived following the subsequent observations.
\begin{itemize}
  \item For any bounded initial condition, $C_2^2e^{-\frac{mc}{1-q}}$ can be tuned to be arbitrarily small by adjusting $c$ and $q$ such that $\frac{c}{1-q}$ is sufficiently large;
  \item Denote $\frac{c}{1-q}=\zeta$. Then, for fixed and sufficiently large $\zeta$,
      \begin{equation}
      \frac{c}{1-q\gamma}=\frac{\zeta (1-q)}{1-q\gamma},
      \end{equation}
      and
      \begin{equation}
      \frac{c^2}{1-q^2}=\frac{\zeta^2(1-q)}{1+q}.
      \end{equation}
      Noticing that the partial derivatives of $\frac{\zeta (1-q)}{1-q\gamma}$ and$\frac{\zeta^2(1-q)}{1+q}$ with respect to $q$ are negative for $q\in(0,1)$ and $\lim_{q\rightarrow 1}\frac{\zeta (1-q)}{1-q\gamma}=0$, $\lim_{q\rightarrow 1}\frac{\zeta^2(1-q)}{1+q}=0,$ it can be concluded that $c$ and $q$ can be adjusted such that $\frac{c}{1-q\gamma}$ and $\frac{c^2}{1-q^2}$ are sufficiently small with fixed $\zeta$. Therefore, by such a tuning rule, $C_2^2e^{-\frac{mc}{1-q}}+\frac{c^2NC^2}{1-q^2}+\Phi_1$ can be adjusted to be arbitrarily small.
  \item $\tilde{\Phi}_2$ depends on the level of privacy $\epsilon$ and if privacy is not of concern, $\epsilon$ can be chosen to be sufficiently large such that $\tilde{\Phi}_2$ is sufficiently small.
\end{itemize}

From \eqref{hou}, we see that the accuracy of the proposed method depends on $c, \epsilon, \bar{q}$ and $q$. Hence, we write $D$ as $D(c,q,\bar{q},\epsilon).$ With fixed requirement of privacy level (i.e., $\epsilon$), one can tune $c,q,\bar{q}$ by solving
\begin{equation}\label{mini}
\begin{aligned}
&\min_{c,q,\bar{q}} \ \ \ \ D(c,q,\bar{q}).\\
&\text{s.t.} \ \ \ \ c>0,q\in(0,1),\bar{q}\in(q,1).
\end{aligned}
\end{equation}
Moreover, noticing that for any fixed $c$ and $q$,
\begin{equation}
\frac{\partial D}{\partial \bar{q}}<0,
\end{equation}
indicating that $D$ is monotonically decreasing with any fixed $c,q$. Hence, one can choose $\bar{q}$ to be sufficiently close to $1$ as $\bar{q}\in(q,1)$ and solve
\begin{equation}\label{mini1}
\begin{aligned}
&\min_{c,q} \ \ \ \ D(c,q)\\
&\text{s.t.} \ \ \ \ c>0,q\in(0,\bar{q}),
\end{aligned}
\end{equation}
to get the optimal $c,q.$

%


\section{Extensions to time-varying communication topologies}\label{time}
In the previous section, we suppose that the communication topology among the players is fixed, undirected and connected for presentation clarity. Actually, the proposed method can be slightly adapted to accommodate time-varying communication graphs that satisfy the following conditions:
\begin{Assumption}\label{as5}
There exists a positive integer $z$ such that $(\mathcal{V},\bigcup_{l=1}^z \mathcal{E}_{l+k})$ is connected for all nonnegative integer $k$, in which $\mathcal{G}_l=(\mathcal{V},\mathcal{E}_l)$ is the undirected communication graph at time $l$ and $\mathcal{E}_l$ is the corresponding edge set at time $l$.
\end{Assumption}

Note that in Section \ref{main}, the communication graph is supposed to be connected at each time instant $k=\{0,1,2,\cdots\}.$ However, in this section, the communication condition is generalized to be time-varying and it is only required that there exists a positive integer $z$ such that the joint graph $\bigcup_{l=1}^z \mathcal{G}_{l+k}$ is connected for nonnegative integer $k$.

\begin{Assumption}\label{as6}
Let $\mathcal{A}(l)$ be the weight matrix associated with $\mathcal{G}_l$ that satisfies:
\begin{enumerate}
  \item There exits a positive constant $\delta$ such that $a_{ii}(l)>\delta$, $a_{ij}(l)>\delta$ if $(j,i)\in\mathcal{E}_l$ and $a_{ij}(l)=0,$ if  $(j,i)\notin\mathcal{E}_l$;
  \item $\mathcal{A}(l)$ is doubly stochastic, i.e., $\mathbf{1}_N^T\mathcal{A}(l)=\mathbf{1}_N^T,$ $\mathcal{A}(l)\mathbf{1}_N=\mathbf{1}_N.$
\end{enumerate}
\end{Assumption}

Let $\Psi(k,s)=\mathcal{A}(k)\mathcal{A}(k-1)\cdots \mathcal{A}(s)$ where $k\geq s\geq 0.$ Then, under Assumptions \ref{as5}-\ref{as6}, the following lemma can be obtained.
\begin{Lemma}
\cite{NEDIC08} Let Assumptions \ref{as5}-\ref{as6} be satisfied. Then,
\begin{enumerate}
  \item $\lim_{k\rightarrow \infty} \Psi(k,s)=\frac{\mathbf{1}_N\mathbf{1}_N^T}{N},$ $\forall s\geq 0;$
  \item For $k\geq s \geq 0,$ $ |[\Psi(k,s)]_{ij}-\frac{1}{N}|\leq \theta \beta^{k+1-s},$
  where $\theta=(1-\frac{\delta}{4N^2})^{-2},$ $\beta=(1-\frac{\delta}{4N^2})^{\frac{1}{z}}.$
\end{enumerate}
\end{Lemma}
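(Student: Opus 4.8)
The plan is to derive part 2 directly and obtain part 1 as an immediate consequence: since $\beta<1$, letting $k\to\infty$ with $s$ fixed forces $\beta^{\,k+1-s}\to 0$, so $[\Psi(k,s)]_{ij}\to \frac{1}{N}$ entrywise, which is exactly the claim $\Psi(k,s)\to \frac{\mathbf{1}_N\mathbf{1}_N^T}{N}$. First I would record the structural facts. Because each $\mathcal{A}(l)$ is doubly stochastic by Assumption \ref{as6}, every backward product $\Psi(k,s)=\mathcal{A}(k)\cdots\mathcal{A}(s)$ is again doubly stochastic, so $\Psi(k,s)\mathbf{1}_N=\mathbf{1}_N$ and $\mathbf{1}_N^T\Psi(k,s)=\mathbf{1}_N^T$; in particular $\frac{\mathbf{1}_N\mathbf{1}_N^T}{N}$ is fixed under left- and right-multiplication by any $\mathcal{A}(l)$. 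This reduces the whole statement to showing that the rows of $\Psi(k,s)$ become asymptotically identical at a geometric rate, with the doubly stochasticity pinning the common limiting row to the uniform vector $\frac{1}{N}\mathbf{1}_N^T$ rather than to some other stochastic limit.

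The core of the argument is a window-based positivity estimate. I would show that over any block of $z$ consecutive steps the product of the corresponding matrices has all its entries bounded below by a positive constant $\eta$ determined by $\delta$ and $N$. Two ingredients make this work. The uniformly positive diagonals $a_{ii}(l)>\delta$ guarantee that once an entry of a partial product becomes positive it stays positive as further matrices are applied, so positivity is \emph{monotone} along the product; and the joint-connectivity condition of Assumption \ref{as5} guarantees that within any window of length $z$ every ordered pair $(i,j)$ is joined by a path of active edges, each carrying weight at least $\delta$. Composing weights along such a path through the persistent diagonals produces the lower bound $\eta$, which one tracks explicitly to recover the constant $1-\frac{\delta}{4N^2}$ appearing in $\beta$ and $\theta$.

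With a uniform lower bound on the entries of each $z$-step product, I would invoke a standard scrambling/Dobrushin contraction: a row-stochastic matrix whose entries are all at least $\eta$ has Dobrushin coefficient at most $1-N\eta$, hence contracts the span (columnwise maximum minus minimum, equivalently the deviation of the rows from their common average) by that factor. Applying this across the $\lfloor(k+1-s)/z\rfloor$ disjoint length-$z$ windows contained in $\Psi(k,s)$ yields geometric decay of the row disagreement at rate $1-\frac{\delta}{4N^2}$ per window; rewriting the window count as an exponent in $k+1-s$ gives $\beta=(1-\frac{\delta}{4N^2})^{1/z}$, and absorbing the rounding in $\lfloor(k+1-s)/z\rfloor$ together with the residual unpaired steps into the prefactor gives $\theta=(1-\frac{\delta}{4N^2})^{-2}$, hence $|[\Psi(k,s)]_{ij}-\frac{1}{N}|\leq \theta\,\beta^{\,k+1-s}$.

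The main obstacle is the window positivity estimate, and in particular pinning down the precise constant $\frac{\delta}{4N^2}$: one must argue carefully that joint connectivity over a span of $z$ steps, combined with the persistence afforded by the positive diagonals, forces \emph{every} entry of the $z$-step product above the threshold, and then convert that multiplicative path bound into the clean geometric form with the stated $\theta$ and $\beta$. The contraction step is routine once the lower bound is in hand, and doubly stochasticity is what fixes the common limit at the symmetric averaging matrix. Since this lemma is quoted verbatim from \cite{NEDIC08}, I would ultimately cite that reference for the detailed constant-tracking and present the above as the structural skeleton.
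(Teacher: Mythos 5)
The paper offers no proof of this lemma at all---it is imported verbatim from \cite{NEDIC08}---so the only question is whether your sketch would actually reconstruct it. The overall architecture (backward products of doubly stochastic matrices are doubly stochastic, row disagreement contracts geometrically over connectivity windows, part 1 follows from part 2 since $\beta<1$) is the right shape, but the central step you propose fails and, even once repaired, cannot produce the stated constants. First, joint connectivity of $\bigcup_{l=1}^{z}\mathcal{E}_{l+k}$ does \emph{not} imply that every entry of a single $z$-step product is positive. Positivity of $[\Psi(k,s)]_{ij}$ requires a temporally ordered chain of active edges from $j$ to $i$; connectedness of the union graph only guarantees that each edge of some path appears \emph{somewhere} in the window, not in the order the matrix product needs. (With $N=3$, $z=2$, edge $\{2,3\}$ active first and edge $\{1,2\}$ active second, the $(3,1)$ entry of the two-step product is exactly zero despite the union graph being connected.) In general one must concatenate on the order of $(N-1)z$ factors---one window per edge of the path---before the product becomes entrywise positive, so your claimed lower bound $\eta$ on the entries of a length-$z$ block does not exist.

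Second, even after enlarging the window to $(N-1)z$, the scrambling/Dobrushin route gives a per-window contraction factor of the form $1-N\eta$ with $\eta$ a product of up to $(N-1)z$ weights, i.e.\ $\eta\le\delta^{(N-1)z}$, which is exponentially small in $N$. No amount of constant-tracking along that path recovers the polynomial factor $1-\frac{\delta}{4N^2}$. In \cite{NEDIC08} that constant comes from a different mechanism: double stochasticity preserves the average, and one shows that the quadratic disagreement $\sum_{i}(x_i-\bar{x})^2$ decreases by the factor $1-\frac{\delta}{4N^2}$ over each connectivity window using the connectivity of the union graph directly, without ever establishing entrywise positivity of the product; the bookkeeping on the exponent $\left\lceil\frac{k-s+1}{z}\right\rceil-2$ is what yields $\theta=\left(1-\frac{\delta}{4N^2}\right)^{-2}$ and $\beta=\left(1-\frac{\delta}{4N^2}\right)^{1/z}$. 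Since Theorems \ref{col1} and \ref{col2} invoke these specific $\theta$ and $\beta$, your sketch proves a qualitatively similar but quantitatively different lemma, and the deferral to the reference at the end is doing essentially all of the work for the statement actually used.
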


With time-varying communication topologies, the Nash equilibrium seeking strategy should be adapted as
\begin{equation}\label{cotn1}
\begin{aligned}
\mathbf{x}(k+1)&=\mathbf{x}(k)-\alpha_k [g_i(x_i(k),y_i(k))]_{vec}\\
\mathbf{y}(k+1)&=\mathcal{A}(k)\mathbf{p}(k)+\mathbf{x}(k+1)-\mathbf{x}(k).
\end{aligned}
\end{equation}

Note that the variations of the communication graph will only affect the convergence results but not the differential privacy of the proposed method. Hence, in the following, only the convergence results are presented.

First, following the proof of Theorem \ref{th3}, the subsequent result can be obtained.
\begin{Theorem}\label{col1}
Suppose that Assumptions \ref{as5}-\ref{as6} are satisfied. Then, for each positive integer $k,$
\begin{equation}
\begin{aligned}
 &\mathbb{E}(|y_i(k)-\bar{x}(k)|)\\
\leq &2(N-1)\theta \beta^{k}C_1+\frac{2(N-1)\theta d\beta}{\beta-\bar{q}}(\beta^{k}-\bar{q}^{k})\\
&+\frac{2\theta  (N-1) C c  (\beta^{k}-q^{k})}{\beta-q}+\frac{d (1-\bar{q}^{k})}{1-\bar{q}}.
\end{aligned}
\end{equation}
for $i\in\mathcal{V}.$
\end{Theorem}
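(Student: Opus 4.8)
The plan is to mirror the proof of Theorem \ref{th3} almost line for line, replacing the powers $\mathcal{A}^{m}$ of the fixed weight matrix by the time-varying transition matrices $\Psi(k,s)$ and substituting the per-entry decay estimate of the preceding lemma (from \cite{NEDIC08}) for the operator-norm bound of Lemma \ref{lemam1}. First I would reuse the same split
\begin{equation}\nonumber
\mathbb{E}(|y_i(k)-\bar{x}(k)|)\leq\frac{1}{N}\sum_{j=1,j\neq i}^N\mathbb{E}(|y_i(k)-y_j(k)|)+\frac{1}{N}\mathbb{E}(|\mathbf{1}_N^T\mathbf{y}(k)-\mathbf{1}_N^T\mathbf{x}(k)|),
\end{equation}
and note that Lemma \ref{lemm} carries over unchanged: its derivation only uses $\mathbf{1}_N^T\mathcal{A}=\mathbf{1}_N^T$, which still holds for each $\mathcal{A}(k)$ by Assumption \ref{as6}. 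Hence the consensus summand again contributes exactly $\frac{d(1-\bar{q}^{k})}{1-\bar{q}}$.

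Next I would unroll the $\mathbf{y}$-recursion in \eqref{cotn1}. Iterating $\mathbf{y}(k+1)=\mathcal{A}(k)(\mathbf{y}(k)+\mathbf{w}(k))+\mathbf{x}(k+1)-\mathbf{x}(k)$ gives
\begin{equation}\nonumber
\mathbf{y}(k+1)=\Psi(k,0)\mathbf{y}(0)+\sum_{j=0}^k\Psi(k,j)\mathbf{w}(j)+\sum_{j=1}^{k+1}\Psi(k,j)(\mathbf{x}(j)-\mathbf{x}(j-1)),
\end{equation}
with the convention $\Psi(k,k+1)=I$, the exact analogue of \eqref{ex2}. I would then bound $|y_i(k+1)-y_j(k+1)|$ by the three corresponding sums of $|[\Psi(k,s)]_{in}-[\Psi(k,s)]_{jn}|$ weighted respectively by $|y_n(0)|\le C_1$, by $|w_n(l)|$, and by $|x_n(l)-x_n(l-1)|\leq\alpha_{l-1}C$, precisely as in \eqref{exptt}.

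The one substantive change is the row-sum estimate. Where the fixed-graph proof turns the operator bound into $\sum_n|[\mathcal{A}^m]_{in}-\frac{1}{N}|\leq\sqrt{N}\gamma^m$, here I would instead apply the entry-wise estimate $|[\Psi(k,s)]_{in}-\frac{1}{N}|\leq\theta\beta^{k+1-s}$ term by term, obtaining $\sum_{n=1}^N|[\Psi(k,s)]_{in}-[\Psi(k,s)]_{jn}|\leq 2N\theta\beta^{k+1-s}$ (the boundary term $s=k+1$, where $\Psi=I$, being dominated directly since $N\theta\ge1$). Summing the geometric series $\sum_{l=0}^k\beta^{k+1-l}\bar{q}^{\,l}=\frac{\beta}{\beta-\bar{q}}(\beta^{k+1}-\bar{q}^{\,k+1})$, and the analogous one with $\alpha_{l-1}=cq^{l-1}$, reproduces the $\beta$-, $\bar{q}$- and $q$-geometric terms; taking expectations commutes with these linear bounds via $\mathbb{E}(|w_n(l)|)=d\bar{q}^{\,l}$. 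Finally the outer $\frac{1}{N}\sum_{j\neq i}$ cancels the extra factor $N$ and leaves the prefactor $\theta$ together with $(N-1)$, so that, combined with the $\frac{d(1-\bar q^k)}{1-\bar q}$ term from Lemma \ref{lemm}, the claimed bound drops out.

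The main obstacle — really the only one — is that a product of distinct time-varying doubly stochastic matrices has no clean spectral-radius bound, so the argument must track the per-entry geometric decay of $\Psi(k,s)$ rather than a single operator norm; this is exactly what replaces the $\sqrt{N}$ route by the $N\theta$ row-sum and produces $\theta,\beta$ in the final estimate. As in Theorem \ref{th3}, the geometric summations implicitly require the decay rates ordered as $\beta>\bar{q}>q$ so that $\beta-\bar{q}$ and $\beta-q$ are positive; under this ordering every remaining manipulation is routine, and no probabilistic ingredient beyond those already used in Theorem \ref{th3} is needed.
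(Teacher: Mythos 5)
Your proposal is correct and follows exactly the route the paper intends: the paper gives no explicit proof of Theorem \ref{col1} beyond stating that it follows the proof of Theorem \ref{th3}, and your adaptation --- unrolling the recursion with the transition matrices $\Psi(k,s)$, reusing Lemma \ref{lemm} (which only needs column stochasticity of each $\mathcal{A}(k)$), and replacing the spectral-norm row-sum bound $\sqrt{N}\gamma^{m}$ by the entrywise bound yielding $2N\theta\beta^{k+1-s}$ --- reproduces the stated constants, including the change from $2(N-1)/\sqrt{N}$ to $2(N-1)\theta$ after the outer average over $j\neq i$. The only quibble is your closing remark that the ordering $\beta>\bar{q}>q$ is required: the closed forms $(\beta^{k}-\bar{q}^{k})/(\beta-\bar{q})$ and $(\beta^{k}-q^{k})/(\beta-q)$ are algebraic identities that remain positive for any distinct rates, so no such assumption is needed.
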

\begin{Remark}
From Theorem \ref{col1}, it can be seen that compared with the results under fixed communication topologies, the time-varying communication topology would affect the estimation speed.
\end{Remark}

Moreover, following Theorem \ref{th1}, the subsequent convergence result can be obtained.
\begin{Theorem}\label{col2}
Suppose that Assumptions 1-3 and \ref{as5}-\ref{as6} are satisfied. Then,
\begin{equation}
\begin{aligned}
&\lim_{k\infty} \mathbb{E}(|| \mathbf{x}(k)-\mathbf{x}^*||^2)\\
\leq & C_2^2e^{-\frac{mc}{1-q}}+\frac{c^2NC^2}{1-q^2}+\bar{\Phi}_1+\bar{\Phi}_2,
\end{aligned}
\end{equation}
where
\begin{equation}
\begin{aligned}
\bar{\Phi}_1=&4(N-1)\sqrt{N}\theta \max_{i\in\mathcal{V}}\{l_i\}\left(C_2+\frac{cC\sqrt{N}}{1-q}\right)\\
&\left(\frac{C_1c}{1-q\beta}+\frac{Cc^2q}{(1-\beta q)(1-q^2)}\right),
\end{aligned}
\end{equation}
and
\begin{equation}
\begin{aligned}
\bar{\Phi}_2=&2\max_{i\in\mathcal{V}}\{l_i\} \left(C_2+\frac{cC\sqrt{N}}{1-q}\right)\sqrt{N}dcq\times\\ &\left(\frac{2(N-1)\theta\beta}{(1-q\beta)(1-\bar{q}q)}+\frac{1}{(1-q)(1-\bar{q}q)}\right).
\end{aligned}
\end{equation}
\end{Theorem}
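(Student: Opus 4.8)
The plan is to repeat, essentially verbatim, the recursive contraction argument used for Theorem \ref{th1}, substituting the time-varying disagreement estimate of Theorem \ref{col1} for the fixed-topology estimate of Theorem \ref{th3}. First I would observe that the action update in \eqref{cotn1} is \emph{identical} to the one in \eqref{cotn}; only the consensus recursion changed. Consequently, expanding $||\mathbf{x}(k+1)-\mathbf{x}^*||^2$ and invoking Assumption \ref{a3} (strong monotonicity, giving the contracting factor $-2\alpha_k m||\mathbf{x}(k)-\mathbf{x}^*||^2$), Assumption \ref{a2} (the Lipschitz bound $||g(\mathbf{x}(k))-[g_i(x_i(k),y_i(k))]_{vec}||\leq \max_i\{l_i\}\,||\mathbf{y}(k)-\mathbf{1}_N\bar{x}(k)||$), and the uniform gradient bound $C$ of Assumption \ref{ass}, I obtain exactly the same one-step inequality as before,
\begin{equation}
\begin{aligned}
&\mathbb{E}(||\mathbf{x}(k+1)-\mathbf{x}^*||^2)\\
\leq & (1-2\alpha_km)\mathbb{E}(||\mathbf{x}(k)-\mathbf{x}^*||^2)+\alpha_k^2NC^2\\
&+2\alpha_k\max_{i\in\mathcal{V}}\{l_i\}\left(C_2+\frac{cC\sqrt{N}}{1-q}\right)\mathbb{E}(||\mathbf{y}(k)-\mathbf{1}_N\bar{x}(k)||),
\end{aligned}
\end{equation}
since none of these estimates uses the structure of the weight matrix, only the boundedness of $g$ and the bound $||\mathbf{x}(k+1)-\mathbf{x}^*||\leq C_2+\tfrac{cC\sqrt{N}}{1-q}$.

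Next I would convert the per-component bound of Theorem \ref{col1} into a bound on the Euclidean disagreement via $||\cdot||_2\leq\sqrt{N}\,||\cdot||_\infty$, obtaining
\begin{equation}
\begin{aligned}
&\mathbb{E}(||\mathbf{y}(k)-\mathbf{1}_N\bar{x}(k)||)\\
\leq & \sqrt{N}\left(2(N-1)\theta\beta^{k}C_1+\frac{2(N-1)\theta d\beta}{\beta-\bar{q}}(\beta^{k}-\bar{q}^{k})\right.\\
&\left.+\frac{2\theta(N-1)Cc(\beta^{k}-q^{k})}{\beta-q}+\frac{d(1-\bar{q}^{k})}{1-\bar{q}}\right).
\end{aligned}
\end{equation}
This is precisely the role played by Theorem \ref{th3} in the fixed case, now carrying the extra factor $\theta$, the slower geometric rate $\beta$ in place of $\gamma$, and an explicit $\sqrt{N}$ that will account for the $\sqrt{N}$ appearing in $\bar{\Phi}_1,\bar{\Phi}_2$ but absent from $\Phi_1,\Phi_2$.

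Then I would unroll the recursion from $k=0$, take $k\to\infty$, and evaluate the resulting geometric series using $\alpha_j=cq^j$. The forcing-free term gives $\prod_{j}(1-2\alpha_jm)\mathbb{E}(||\mathbf{x}(0)-\mathbf{x}^*||^2)\leq C_2^2 e^{-mc/(1-q)}$ after the elementary bounds $1-2\alpha_jm\leq 1-\alpha_jm\leq e^{-\alpha_jm}$ and $\sum_j\alpha_j=\tfrac{c}{1-q}$; the stepsize term gives $\sum_j\alpha_j^2NC^2=\tfrac{c^2NC^2}{1-q^2}$; and the four disagreement contributions, weighted by $2\alpha_j\max_i\{l_i\}(C_2+\tfrac{cC\sqrt N}{1-q})\sqrt N$, reduce to the convergent sums $\sum_j\alpha_j\beta^j=\tfrac{c}{1-q\beta}$, $\sum_j\alpha_j(\beta^j-\bar q^j)$, $\sum_j\alpha_j(\beta^j-q^j)$ and $\sum_j\alpha_j(1-\bar q^j)$, each finite because $q,\beta,\bar q\in(0,1)$ force $q\beta<1$, $q\bar q<1$ and $q^2<1$. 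Grouping the $C_1$- and $Cc^2$-type contributions collapses (using $\tfrac{1}{1-q\beta}-\tfrac{1}{1-q^2}=\tfrac{q(\beta-q)}{(1-q\beta)(1-q^2)}$) into $\bar{\Phi}_1$, while the remaining noise-induced $dcq$-type contributions collapse (using the analogous identities for $\beta-\bar q$ and $1-\bar q$) into $\bar{\Phi}_2$, reproducing the claimed bound.

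The argument is almost mechanical, so the only delicate point is the bookkeeping of these geometric sums: I must verify that replacing $\gamma$ by $\beta$ and inserting the factors $\theta$ and $\sqrt{N}$ propagates consistently through each series, and that the partial-fraction telescoping sends the mixed terms $\beta^j-\bar q^j$, $\beta^j-q^j$ and $1-\bar q^j$ exactly into the denominators $(1-q\beta)(1-\bar q q)$, $(1-q\beta)(1-q^2)$ and $(1-q)(1-\bar q q)$ displayed in $\bar{\Phi}_1,\bar{\Phi}_2$. I would also note explicitly that the differential-privacy guarantee is untouched, since \eqref{cotn1} perturbs the communicated quantities $p_i(k)=y_i(k)+w_i(k)$ in exactly the same manner as \eqref{cotn}; hence only the convergence bound, and not Theorem \ref{the2}, is affected by the switch to time-varying graphs.
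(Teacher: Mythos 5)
Your proposal is correct and follows exactly the route the paper intends: the paper gives no separate proof of Theorem \ref{col2}, stating only that it follows from the argument of Theorem \ref{th1} with the disagreement bound of Theorem \ref{col1} substituted for that of Theorem \ref{th3}, which is precisely what you carry out. Your bookkeeping of the geometric series (the factor $\sqrt{N}\theta$, the replacement of $\gamma$ by $\beta$, and the partial-fraction identities producing the denominators $(1-q\beta)(1-q^2)$, $(1-q\beta)(1-\bar{q}q)$ and $(1-q)(1-\bar{q}q)$) reproduces $\bar{\Phi}_1$ and $\bar{\Phi}_2$ as stated.
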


\section{A Numerical example}\label{num_ex}

In this section, we consider the $5$-player energy consumption game studied in \cite{YEcyber17}. In the energy consumption game, player $i$'s objective function is given by
\begin{equation}
f_i(\mathbf{x})=(x_i-\hat{x}_i)^2+(0.04\sum_{i=1}^5x_i+5)x_i,
\end{equation}
in which $\hat{x}_1=50,\hat{x}_2=55,\hat{x}_3=60,\hat{x}_4=65,\hat{x}_5=70.$
As demonstrated in \cite{YEcyber17}, the game has a unique pure-strategy Nash equilibrium at
$\mathbf{x}^*=(41.5,46.4,51.3,56.2,61.1).$ In the following, fixed communication topologies and time-varying communication topologies will be considered successively.

\subsection{Nash equilibrium seeking under fixed communication topologies}\label{fix}
In the simulation, the players are supposed to communicate via a cycle depicted in Fig. \ref{comm1}.
\begin{figure}[!t]
\centering
\hspace{-30mm}\scalebox{0.5}{\includegraphics{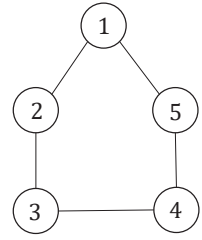}}
\vspace{-60mm}
\caption{The communication graph for the players.}\label{comm1}
\end{figure}
Correspondingly, the weight matrix is given as
\begin{equation}\nonumber
\mathcal{A}=\left[
               \begin{array}{ccccc}
                 0.5 & 0.2 & 0 & 0 & 0.3 \\
                 0.2 & 0.5 & 0.3 & 0 & 0 \\
                 0 & 0.3 & 0.5 & 0.2 & 0 \\
                 0 & 0 & 0.2 & 0.5 & 0.3 \\
                 0.3 & 0 & 0 & 0.3 & 0.4 \\
               \end{array}
             \right],
             \end{equation}
and $c=1,q=0.9,d=1,\bar{q}=0.99.$ Moreover, the proposed method is run for $2000$ times for the observation of the simulation results. Fig. \ref{simula1} shows the expectations of the players' squared Nash equilibrium seeking errors, i.e., $\mathbb{E}((x_i(k)-x_i^*)^2)$ for $i\in\{1,2,\cdots,5\}$ from which we see that the proposed method drives $\mathbb{E}((x_i(k)-x_i^*)^2)$ to a small neighborhood of zero.
Fig. \ref{simulas2} (1) plots $\mathbb{E}((y_i(k)-\bar{x}^*)^2)$ for $i\in\{1,2,\cdots,5\}$ generated by the proposed method and Fig. \ref{simulas2} (2) shows $\bar{y}_i(k)-\bar{x}^*$ for $i\in\{1,2,\cdots,5\}$, where $\bar{y}_i(k)$ denotes the averaged value of $y_i(k)$ for the $2000$ running times and $\bar{x}^*=\frac{1}{5}\sum_{i=1}^5x_i^*.$

\begin{figure}[!htp]
\centering
\scalebox{0.65}{\includegraphics{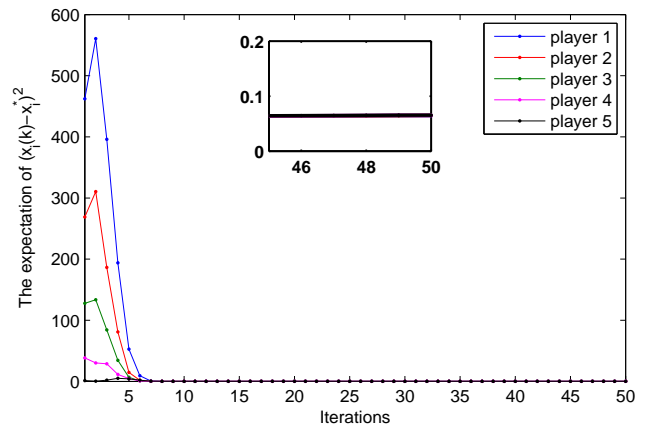}}
\caption{The expectations of the players' squared Nash equilibrium seeking errors, i.e., $\mathbb{E}((x_i(k)-x_i^*)^2),i\in\{1,2,\cdots,5\},$ generated by the proposed method in \eqref{al1}-\eqref{al2}.}\label{simula1}
\end{figure}

\begin{figure}[!htp]
\centering
\hspace{-8mm}\scalebox{0.65}{\includegraphics{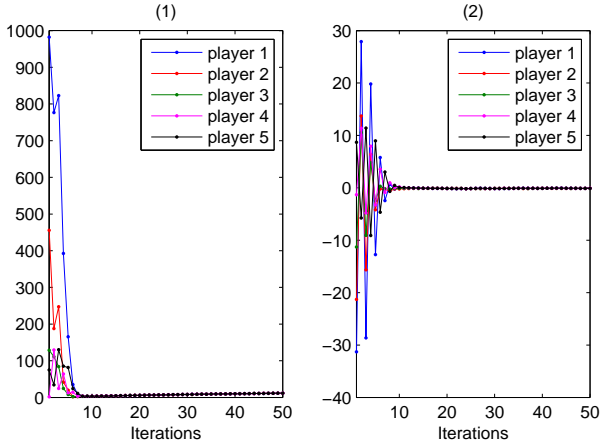}}
\caption{(1): The expectations of the players' squared estimation errors on $\bar{x}^*$, i.e., $\mathbb{E}((y_i(k)-\bar{x}^*)^2)$ for $i\in\{1,2,\cdots,5\}$; (2): The plots of $\bar{y}_i(k)-\bar{x}^*$ for $i\in\{1,2,\cdots,5\}$ generated by the proposed method in \eqref{al1}-\eqref{al2}.}\label{simulas2}
\end{figure}

To show the tradeoff between the convergence accuracy and $d$, we vary $d$ from $0$ to $3$ and observe the simulation results. Fig. \ref{simulas3} plots $\mathbb{E}(||\mathbf{x}(\infty)-\mathbf{x}^*||^2).$ Roughly speaking, the figure shows that as $d$ increases, the upper bound of $\mathbb{E}(||\mathbf{x}(\infty)-\mathbf{x}^*||^2)$ increases, which illustrates the tradeoff between the privacy level and the convergence accuracy.

\begin{figure}[!htp]
\centering
\hspace{-8mm}\scalebox{0.65}{\includegraphics{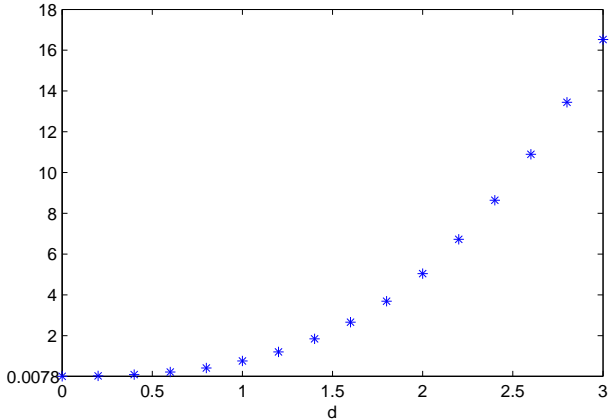}}
\caption{The plot of $\mathbb{E}(||\mathbf{x}(\infty)-\mathbf{x}^*||^2)$ with $d$ varying from $0$ to $3$.}\label{simulas3}
\end{figure}

To observe the data distributions generated by the proposed method, we fix $d=1,\bar{q}=0.99$ and run the proposed method for $20000$ times. Fig. \ref{simulas4} plots the data distributions of $x_i(\infty)$ and its corresponding fitted density functions for $i\in\{1,2,\cdots,5\}$. Fig. \ref{simulas5} shows the data distributions of $y_i(\infty)$ and the plots of the corresponding fitted density functions for players $1$-$5,$ respectively. From Figs. \ref{simulas4}-\ref{simulas5}, we see that the players' actions converge to a small neighborhood of the Nash equilibrium with a high probability under the given parameters.

\begin{figure}[!htp]
\centering
\scalebox{0.65}{\includegraphics{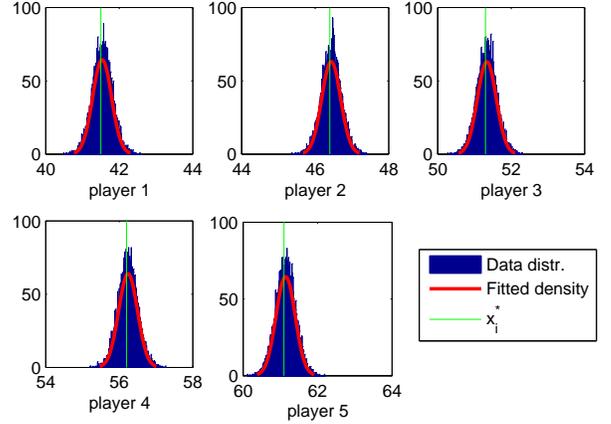}}
\caption{The plots of data distributions of $x_i(\infty)$ and the corresponding fitted density functions for players $1$-$5$, generated by the proposed method in \eqref{al1}-\eqref{al2}.}\label{simulas4}
\end{figure}

\begin{figure}[!htp]
\centering
\scalebox{0.65}{\includegraphics{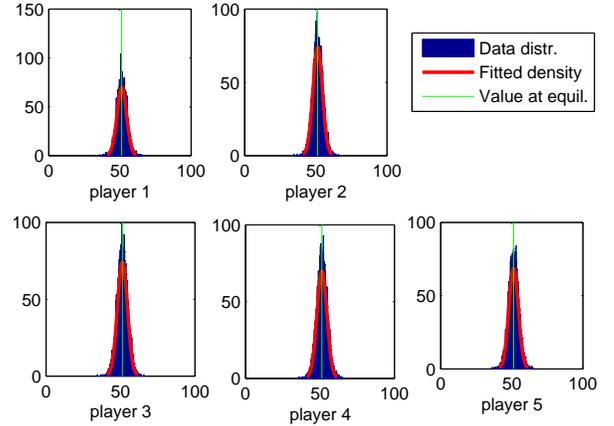}}
\caption{The plots of data distributions of $y_i(\infty)$ and the corresponding fitted density functions for players $1$-$5$, generated by the proposed method in \eqref{al1}-\eqref{al2}.}\label{simulas5}
\end{figure}

\subsection{Nash equilibrium seeking under time-varying communication topologies}
In the simulation, we suppose that the communication topology among the players switches between the two graphs depicted in Fig. \ref{comm2}. Correspondingly, we define
\begin{equation}\nonumber
\mathcal{A}(k)=\left[
                                             \begin{array}{ccccc}
                                               0.3 & 0.3 & 0 & 0 & 0.4 \\
                                               0.3 & 0.5 & 0.2 & 0 & 0 \\
                                               0 & 0.2 & 0.5 & 0 & 0.3 \\
                                               0 & 0 & 0 & 1 & 0 \\
                                               0.4 & 0 & 0.3 & 0 & 0.3 \\
                                             \end{array}
                                           \right],
\end{equation}
for $k\in\{0,2,4,6,\cdots\}$. In addition,
\begin{equation}\nonumber
\mathcal{A}(k)=\left[
                                             \begin{array}{ccccc}
                                               1 & 0 & 0 & 0 & 0 \\
                                               0 & 1 & 0 & 0 & 0 \\
                                               0 & 0 & 1 & 0 & 0 \\
                                               0 & 0 & 0 & 0.7 & 0.3 \\
                                               0 & 0 & 0 & 0.3 & 0.7 \\
                                             \end{array}
                                           \right],
\end{equation}
 for $k\in\{1,3,5,7\cdots\}$. For comparison convenience, the parameters are chosen as those in Section \ref{fix}. Moreover, the proposed method is run for $2000$ times for the observation of the simulation results. Fig. \ref{simula1_time_varying} plots $\mathbb{E}((x_i(k)-x_i^*)^2),i\in\{1,2,\cdots,5\},$ and the two sub-figures in Fig. \ref{simulas2_time_varying} depict  $\mathbb{E}((y_i(k)-\bar{x}^*)^2)$  and $\bar{y}_i(k)-\bar{x}^*$ for $i\in\{1,2,\cdots,5\},$ respectively. From Figs. \ref{simula1_time_varying}-\ref{simulas2_time_varying}, it can be seen that driven by the proposed method, $\mathbb{E}((x_i(k)-x_i^*)^2),$ $\mathbb{E}((y_i(k)-\bar{x}^*)^2)$ and $\bar{y}_i(k)-\bar{x}^*$ for $i\in\{1,2,\cdots,5\}$ would converge to a small neighborhood of zero. In addition, comparing Fig. \ref{simula1_time_varying}. (2) with Fig. \ref{simulas2}. (2), it is clear that  the players can achieve the aggregate estimation at a faster speed under the fixed communication topology given in Fig. \ref{comm1}.

\begin{figure}[!htp]
\centering
\scalebox{0.5}{\includegraphics{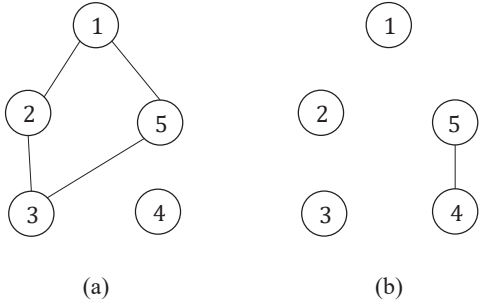}}
\vspace{-50mm}
\caption{(a) is the communication graph for the players at $k=\{0,2,4,6,\cdots\}$ and (b) is the communication graph for the players at $k=\{1,3,5,7,\cdots\}$.}\label{comm2}
\end{figure}

\begin{figure}[!htp]
\centering
\scalebox{0.56}{\includegraphics{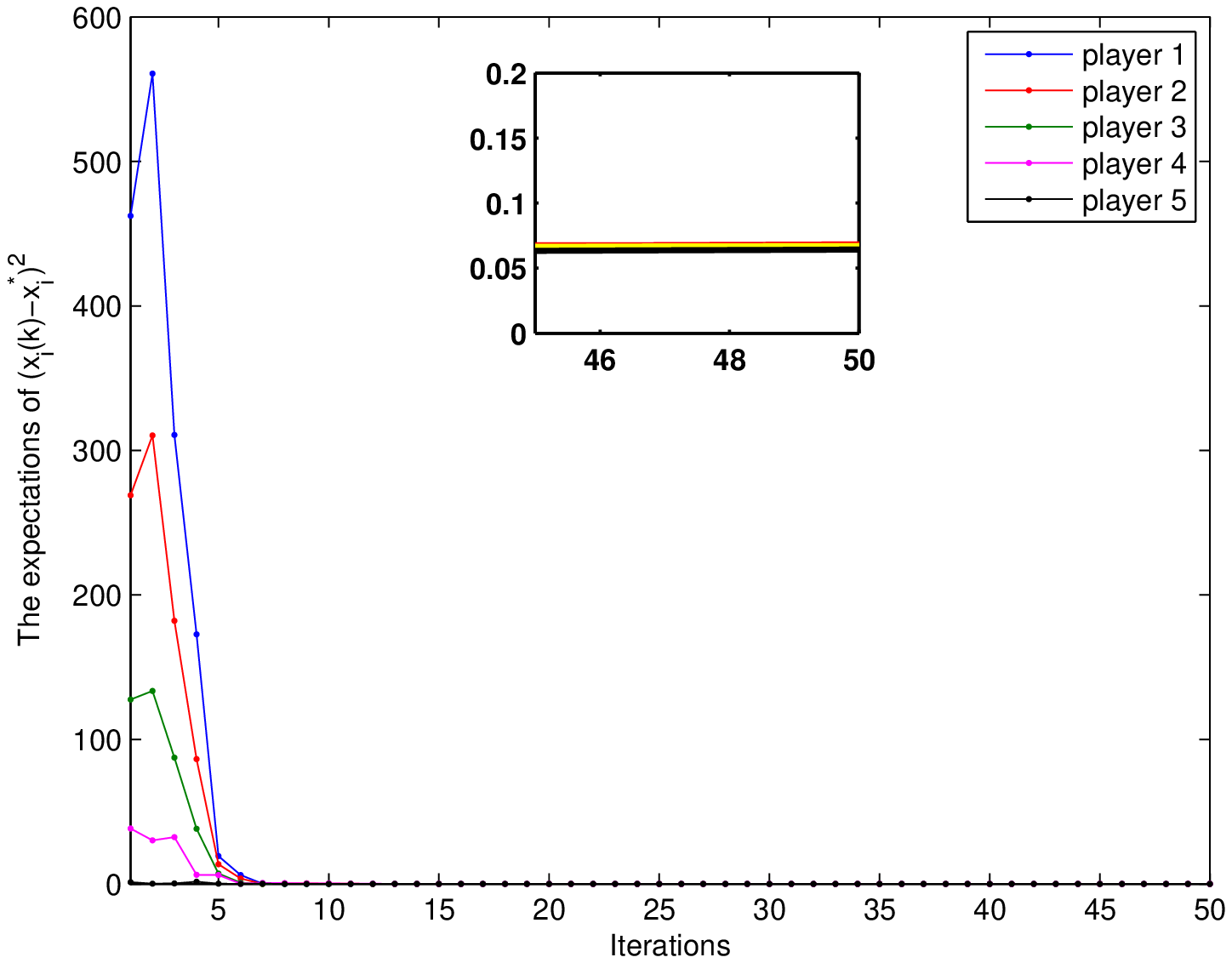}}
\caption{The expectations of the players' squared Nash equilibrium seeking errors, i.e., $\mathbb{E}((x_i(k)-x_i^*)^2),i\in\{1,2,\cdots,5\},$ generated by the proposed method in \eqref{cotn1}.}\label{simula1_time_varying}
\end{figure}

\begin{figure}[!htp]
\centering
\scalebox{0.56}{\includegraphics{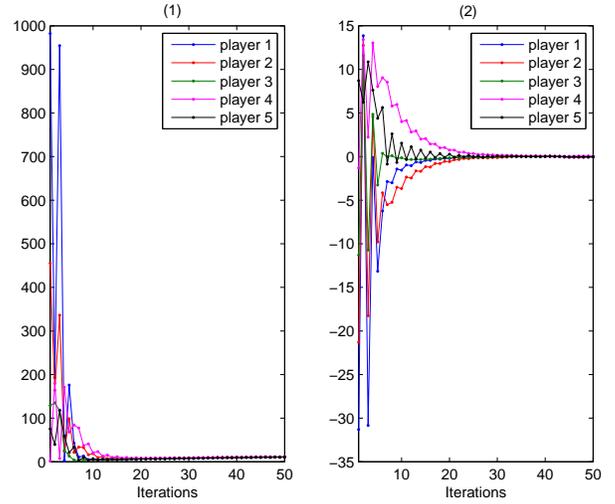}}
\caption{(1): The expectations of the players' squared estimation errors on $\bar{x}^*$, i.e., $\mathbb{E}((y_i(k)-\bar{x}^*)^2)$ for $i\in\{1,2,\cdots,5\}$; (2): The plots of $\bar{y}_i(k)-\bar{x}^*$ for $i\in\{1,2,\cdots,5\}$ generated by the proposed method in \eqref{cotn1}}\label{simulas2_time_varying}
\end{figure}

To show the tradeoff between the privacy level and the convergence accuracy under the time-varying communication topologies, we vary $d$ from $0$ to $3$. Correspondingly, the plot of $\mathbb{E}(||\mathbf{x}(\infty)-\mathbf{x}^*||^2)$ with different values of $d$ is given in Fig.  \ref{simulas3_timevying}, from which we see that $\mathbb{E}(||\mathbf{x}(\infty)-\mathbf{x}^*||^2)$ increases with $d$.

\begin{figure}[!htp]
\centering
\scalebox{0.54}{\includegraphics{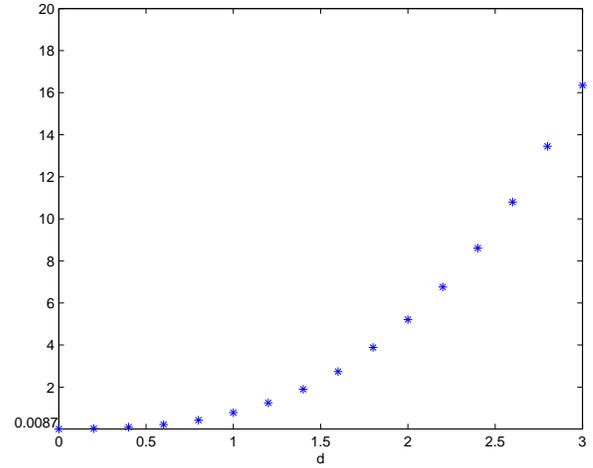}}
\caption{The plot of $\mathbb{E}(||\mathbf{x}(\infty)-\mathbf{x}^*||^2)$ with $d$ varying from $0$ to $3$.}\label{simulas3_timevying}
\end{figure}

\begin{figure}[!htp]
\centering
\hspace{-8mm}\scalebox{0.56}{\includegraphics{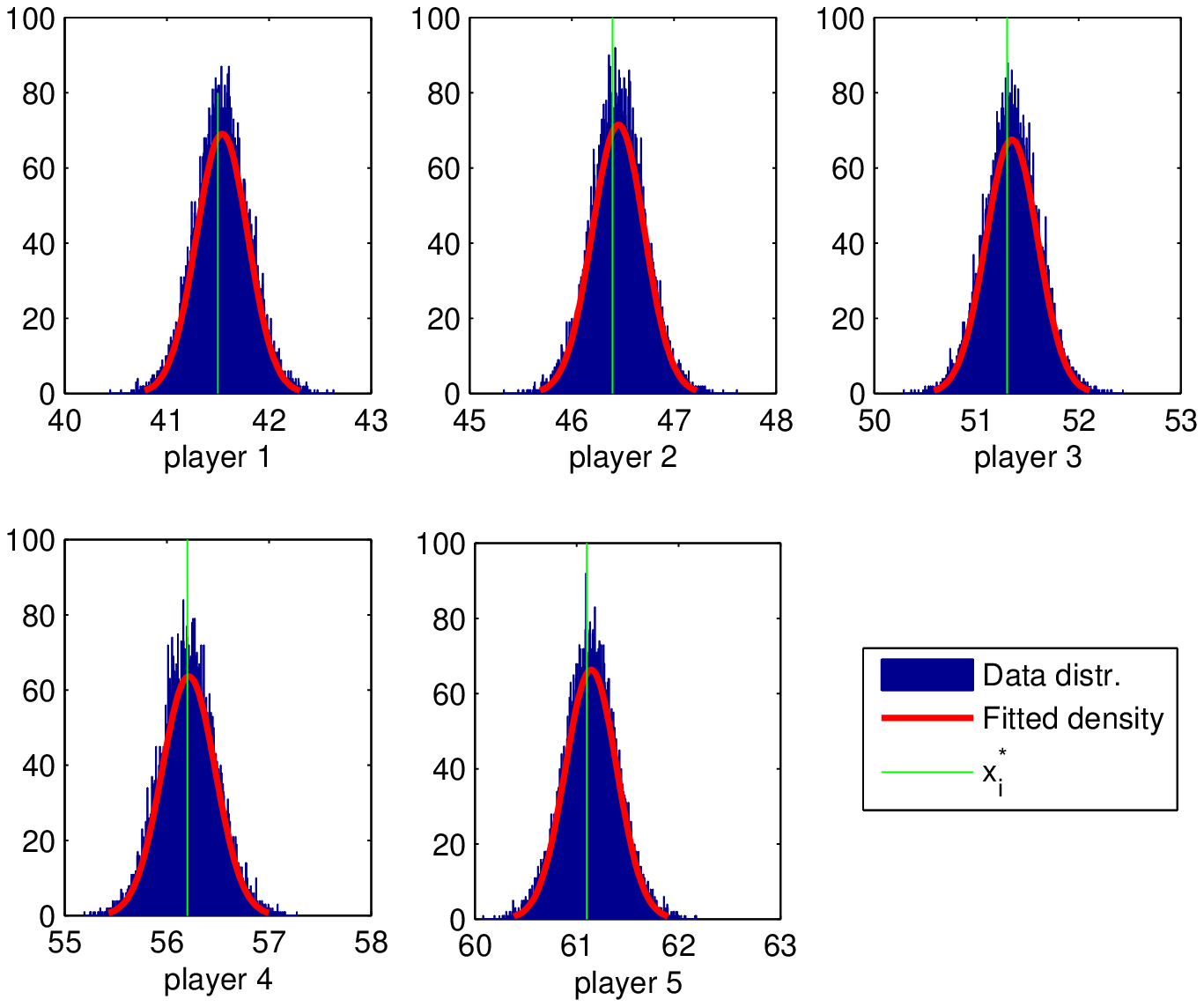}}
\caption{The plots of data distributions of $x_i(\infty)$ and the corresponding fitted density functions for players $1$-$5,$ generated by the proposed method in \eqref{cotn1}.}\label{simulas4_time_varying}
\end{figure}

\begin{figure}[!htp]
\centering
\hspace{-8mm}\scalebox{0.56}{\includegraphics{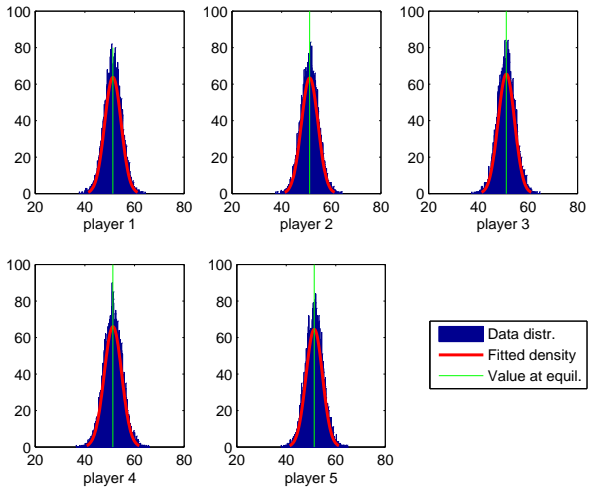}}
\caption{The plots of data distributions of $y_i(\infty)$ and  the corresponding fitted density functions for players $1$-$5,$ generated by the proposed method in \eqref{cotn1}.}\label{simulas5_time_varying}
\end{figure}

Likewise, to observe the data distributions generated by the proposed method, we fix $d=1,\bar{q}=0.99$ and run the proposed method for $20000$ times. The data distributions of $x_i(\infty)$ and $y_i(\infty)$ for $i\in\{1,2,\cdots,5\}$, together with their corresponding fitted density functions are plotted in Figs. \ref{simulas4_time_varying}-\ref{simulas5_time_varying}.  Figs. \ref{simulas4_time_varying}-\ref{simulas5_time_varying} show that the players' actions converge to a small neighborhood of the Nash equilibrium with a high probability under the given parameters.

\section{Conclusions}\label{conc}

This paper considers privacy-preservation in the distributed Nash equilibrium seeking problem for networked aggregative games. To estimate the averaged value of the players' actions, a dynamic average consensus protocol is employed in which the transmitted information is masked by independent random noises drawn from Laplace distributions.  The random noises are included for the protection of the players' objective functions. Moreover, with the estimated information, the gradient descent method with a decaying stepsize is implemented to optimize the players' objective functions. The convergence property as well as the privacy level of the proposed method are analytically investigated. It is shown that there is a tradeoff between the convergence accuracy and the privacy level. Fixed communication topologies and time-varying communication topologies are addressed successively in the paper. Privacy-preserving Nash equilibrium seeking for more general games (see e.g., the games considered in \cite{YETAC17}-\cite{YETAC19}) will be included in our future works.


\begin{thebibliography}{99}
\bibitem{LiAHN}N. Li, N. Zhang, S. Das, B. Thuraisingham, ``Privacy preservation in wireless sensor networks: A state-of-the-art survey," \emph{Ad Hoc Networks} vol. 7, no. 8,  pp. 1501-1514, 2009.
\bibitem{LiangTSG13}X. Liang, X. Li, R. Lu, X. Lin, and X. Shen, ``UDP: Usage-based dynamic pricing with privacy
preservation for smart grid," \emph{IEEE Transactions on Smart Grid,} vol. 4, no. 1, pp. 141-150, 2013.
\bibitem{Wu10}X. Wu, X. Ying, K. Liu, L. Chen, \emph{A Survey of Privacy-Preservation of Graphs and Social Networks,} In: C. Aggarwal, H. Wang  (eds) Managing and Mining Graph Data. Advances in Database Systems, vol. 40, Springer, Boston, MA, 2010.
\bibitem{Agiza03}H. Agiza, A. Elsadany£¬ ``Nonlinear dynamics in the Cournot duopoly game with heterogeneous players," \emph{Physica A: Statistical Mechanics and its Applications}, vol. 320  pp. 512¨C524, 2003.
\bibitem{WangIET}X. Wang, N. Xiao, L. Xie, E. Frazzoli, D. Rus, ``Decentralised dynamic games for large population stochastic multi-agent systems," \emph{IET Control Theory and Applications,} vol. 9, no. 3, pp. 503-510, 2014.
\bibitem{Cornes} R. Cornes, R. Hartley, ``Aggregative public good games," \emph{Journal of Public Economic Theory,} vol. 9, no. 2, pp. 201-219, 2007.
\bibitem{KoshalOR16}J. Koshal, A. Nedic, U. Shanbhag, ``Distributed algorithms for aggregative games on graphs," \emph{Operations Research,} vol. 64, no. 3, pp. 680-704, 2016.
\bibitem{YEcyber17} M. Ye, G. Hu, ``Game design and analysis for price-based demand response: an aggregate game approach," \emph{IEEE Transactions on Cybernetics}, vol. 47, no. 3, pp. 720-730, 2017.
\bibitem{Salehisadaghiani}F. Salehisadaghiani and L. Pavel, ``Distributed Nash equilibrium seeking: a gossip-based algorithm," \emph{Automatica,}  vol. 72, pp. 209-216, 2016.
\bibitem{LiangAT17} S. Liang, P. Yi and Y. Hong, ``Distributed Nash equilibrium seeking for aggregative games with coupled constraints," \emph{Automatica}, vol. 85, pp. 179-185, 2017.
\bibitem{PariseCDC15}F. Parise, B. Gentile, S. Grammatico and J. Lygeros, ``Network aggregative games: distributed convergence to Nash equilibria," \emph{IEEE Conference on Decision and Control,} pp. 2295-2300, 2015.
\bibitem{Cummings}R. Cummings, M. Kearns, A. Roth and Z. Wu, \emph{Privacy and Truthful Equilibrium Selection for Aggregative Games,} in E. Markakis, G. Schafer  (eds) Web and Internet Economics, Lecture Notes in Computer Science, vol. 9470, Springer, Berlin, Heidelberg, 2015.
\bibitem{ZhouJSA} P. Zhou, W. Wei, K. Bian, D. Wu, Y. Hu, Q.Wang, ``Private and truthful aggregative game for large-scale spectrum sharing", \emph{IEEE Journal on Selected Areas in Communications,} vol. 35, no. 2, pp. 463-477, 2017.
\bibitem{Huang15} Z. Huang, S. Mitra and N. Vaidya, ``Differentially private distributed optimization," \emph{International Conference on Distributed Computing and Networking}, 2015.
\bibitem{HanTAC17}S. Han, U. Topcu, G. Pappas, ``Differentially private distributed constrained optimization," \emph{IEEE Transactions on Automatic Control,} vol. 62, no. 1, pp. 50-64, 2017.
\bibitem{Nozari} E. Nozari, P. Tallapragada, J. Cortes, ``Differentially private distributed convex optimization via functional perturbation," \emph{IEEE Transactions on Control of Network Systems,} vol. 5, no. 1, pp. 395-408, 2018.
\bibitem{DingCDC18}T. Ding, S. Zhu, J. He, C. Chen and X. Guan, ``Consensus-based distributed optimization in multi-agent systems: convergence and differential privacy," \emph{IEEE Conference on Decision and Control,} pp. 3409-3414, 2018.
\bibitem{MoTAC17} Y. Mo, R. Murray, ``Privacy preserving average consensus," \emph{IEEE Transactions on Automatic Control,} vol. 62, pp. 753-765, 2017.
\bibitem{NozariAT}E. Nozari, P. Tallapragada, J. Cortes, ``Differentially private average consensus: obstructions, trade-offs, and optimal algorithm design," \emph{Automatica,} vol. 81, pp. 221-231, 2017.
\bibitem{Manitara13}N. Manitara, C. Hadjicostis, ``Privacy-preserving asymptotic average consensus," \emph{European Control Conference}, pp. 760-765, 2013.
\bibitem{Dwork} C. Dwork, \emph{Differential Privacy: A Survey of Results,} in Theory and Applications of Models of Computation. New York, NY, USA: Springer-Verlag, pp. 1-19, 2008.
\bibitem{LUaT18} Y. Lu and M. Zhu, ``Privacy preserving distributed optimization using homomorphic encryption," \emph{Automatica}, vol. 96, pp. 314-325, 2018.
\bibitem{ZhuAT10} M. Zhu, S. Martinez, ``Discrete-time dynamic average consensus" \emph{Automatica}, vol. 46, pp. 322-329, 2010.
\bibitem{Ogata94}K. Ogata, \emph{Discrete-time Control Systems,} 2nd Edition, Prentice Hall, 1994.
\bibitem{YETAC17} M. Ye, G. Hu, ``Distributed Nash equilibrium seeking by a consensus based approach," \emph{IEEE Transactions on Automatic Control,} vol. 62, no. 9, pp. 4811-4818, 2017.
\bibitem{YEAT18} M. Ye, G. Hu, and F. Lewis, ``Nash equilibrium seeking for N-coalition non-cooperative games," \emph{Automatica,} vol. 95, pp. 266-272, 2018.
\bibitem{YETAC19}M. Ye, G. Hu, F. Lewis, L. Xie, ``A unified strategy for solution seeking in graphical N-coalition noncooperative games," \emph{IEEE Transactions on Automatic Control,} accepted, published online, DOI:10.1109/TAC.2019.2901820.

\bibitem{HeACC17} J. He and L. Cai, ``Differential private noise adding mechanism: basic conditions and its applications," \emph{Americal Control Conference,} pp. 1673-1678, 2017.
\bibitem{LiTAC2010}T. Li and J. Zhang, ``Consensus conditions of multi-agent systems with time-varying topologies and stochastic communication noises," \emph{IEEE Transactions on Automatic Control,} vol. 55, no. 9, pp. 2043-2057, 2010.
\bibitem{LongIJRNC}Y. Long, S. Liu, and L. Xie, ``Distributed consensus of discrete-time multi-agent systems with multiplicative noises," \emph{International Journal of Robust and Nonlinear Control,} vol. 25, pp. 3113-3131, 2015.

\bibitem{Facchinei03} F. Facchinei,  J. Pang, \emph{Finite-dimensional variational inequalities and complementarity problems-Volume I,} Springer, New York, NY, 2003.
    \bibitem{Xiao} L. Xiao and S. Boyd, ``Fast linear iterations for distributed averaging,"
\emph{Systems and Control Letters,} vol. 53, no. 1, pp. 65-78, 2004.
\bibitem{Johansson} B. Johansson, T. Keviczky, M. Johansson, and K. H. Johansson, ``Subgradient
methods and consensus algorithms for solving convex optimization
problems," \emph{IEEE Conference on Decision and Control},
 pp. 4185-4190, 2008.
\bibitem{Ram} S. Ram, A. Nedic and V. Veeravalli, ``Distributed stochastic subgradient projection algorithms for convex optimization," \emph{Journal of optimization theory and Applications,} vol. 147, no. 3, pp 516-545, 2010.
\bibitem{NEDIC08}A. Nedic, A. Olshevsky, A. Ozdaglar, and J. Tsitsiklis, `` Distributed Subgradient Methods and Quantization Effects," \emph{IEEE Conference on Decision and Control,} pp. 4177-4184, 2008.
\end{thebibliography}
\end{document}